\definecolor{darkblue}{rgb}{0.0,0,0.7}
\newcommand{\darkblue}{\color{darkblue}}
\definecolor{darkred}{rgb}{0.68,0,0}
\definecolor{darkgreen}{rgb}{0,.38,0}
\newcommand{\darkgreen}{\color{darkgreen}}
\newcommand{\defn}[1]{\emph{\darkblue #1}}
\newcommand{\defnb}[1]{\emph{\darkblue #1}}
\newcommand{\defng}[1]{\emph{\darkgreen #1}}
\setlist[enumerate]{
	label=\textnormal{({\roman*})},
	ref={\roman*}}
\def\th@plain{%
	\thm@notefont{}% same as heading font
	\itshape % body font
}
\def\th@definition{%
	\thm@notefont{}% same as heading font
	\normalfont % body font
}
\newtheorem{thm}{Theorem}[section]
\newtheorem{lemma}[thm]{Lemma}
\newtheorem*{claim*}{Claim}
\newtheorem{cor}[thm]{Corollary}
\newtheorem{prop}[thm]{Proposition}
\newtheorem{conj}[thm]{Conjecture}
\theoremstyle{definition}
\newtheorem{rem}[thm]{Remark}
\numberwithin{figure}{section}
\numberwithin{equation}{section}
\def\nn{\mathbb N}
\def\qqq{\mathbb Q}
\def\pp{\mathbb P}
\def\ga{\gamma}
\def\al{\alpha}
\def\be{\beta}
\def\ve{\varepsilon}
\def\cT{\mathcal T}
\def\ssu{\subset}
\def\<{\langle}
\def\>{\rangle}
\def\width{\text{{\rm width}}}
\def\0{{\mathbf 0}}
\def\.{\hskip.06cm}
\def\ts{\hskip.03cm}
\def\di{{\small{\ts\diamond\ts}}}
\def\ag{\rho}
\def\agr{\ag}
\def\aS{\textrm{S}}
\def\as{\textrm{s}}
\def\asr{\textrm{\em s}}
\def\aG{\textrm{G}}
\def\aGr{\textrm{\em G}}
\def\agg{\textrm{g}}
\def\aggr{\textrm{\em g}}
\def\aR{\textrm{R}}
\def\aRr{\textrm{\em R}}
\def\ar{\textrm{r}}
\def\arr{\textrm{\em r}}
\def\.{\hskip.06cm}
\def\ts{\hskip.03cm}
\def\nin{\noindent}
\DeclareMathOperator{\Ec}{\mathcal{E}} %The set of all linear extensions
\newcommand{\ogeneric}[2][0.7]{%
	\vphantom{\oplus}\mathpalette\o@generic{{#1}{#2}}%
}
\newcommand{\o@generic}[2]{\o@@generic#1#2}
\newcommand{\o@@generic}[3]{%
	\begingroup
	\sbox\z@{$\m@th#1\oplus$}%
	\dimen@=\dimexpr\ht\z@+\dp\z@\relax
	\savebox\tw@[\totalheight]{$\m@th#1\bigcirc$}%
	\makebox[\wd\z@]{%
		\ooalign{%
			$#1\vcenter{\hbox{\resizebox{\dimen@}{!}{\usebox\tw@}}}$\cr
			\hidewidth
			$#1\vcenter{\hbox{\resizebox{#2\dimen@}{!}{$#1\vphantom{\oplus}{#3}$}}}$%
			\hidewidth
			\cr
		}%
	}%
	\endgroup
}
\newcommand{\ole}{\mathrel{\ogeneric{<}}}
\title{Linear extensions and continued fractions}
\date{\today}
 \author{Swee Hong Chan}
 \address[Swee Hong Chan]{Department of Mathematics, Rutgers University,  Piscatway, NJ 08854.}
 \email{\texttt{sc2518@rutgers.edu}}
 \author[\ts Igor Pak]{Igor Pak}
 \address[Igor Pak]{Department of Mathematics, UCLA,  Los Angeles, CA 90095.}
 \email{\texttt{pak@math.ucla.edu}}
\begin{document}

\begin{abstract}
We introduce several new constructions of finite posets with the number
of linear extensions given by generalized continued fractions.
We apply our results to the problem of the minimum number of
elements needed for a poset with a given number of linear
extensions.
\end{abstract}
	
\maketitle
	
\vskip-.5cm

% \newpage

\section{Introduction} \label{s:intro}
%
%A bridge between areas is a wonderful thing to have.

\subsection{Foreword} \label{ss:intro-for}
Continued fractions go back to antiquity \cite{Bre91} and
are surprisingly versatile.  They appear across mathematics,
from number theory \cite{BPSZ14,RS92} to analysis \cite{JT80,Khi97}, from
cluster algebras \cite{CS18} to discrete geometry \cite{Kar13}
to signal processing \cite{Sau21}.
In combinatorics, they famously enumerate partitions \cite{AB05},
lattice paths \cite{Fla80} (see also \cite{FS,GJ83,PSZ23}), permutations
\cite{Eli17,SZ22}, and perfect matchings \cite{Vie85} (see also \cite{Sch19,Spi21}).

Curiously, the applications go in both directions:
the asymptotics of combinatorial sequences can be derived from analytic
properties of continued functions, while combinatorial interpretations
imply positivity properties.  This paper explores connections between
linear extensions of finite posets and continued fractions, and their
asymptotic applications to counting.

Note that we utilize standard terminologies in order theory and continued fraction theory, and we include detailed definitions in Section~\ref{s:def} for the reader's convenience and clarity.

\smallskip

\subsection{Linear extensions} \label{ss:intro-LE}
Let \. $P=(X,\prec)$ \. be a poset with \. $|X|=n$ \. elements.
Denote \. $[n]:=\{1,\ldots,n\}$.
A \defn{linear extension} of $P$ is a bijection \. $f: X \to [n]$,
such that
\. $f(x) < f(y)$ \. for all \. $x \prec y$.
Let \ts $\Ec(P)$ \ts be the set of linear extensions of $P$,
and denote \. $e(P):=|\Ec(P)|$.  Clearly, \ts $1\le e(P) \le n!$ \.
See \cite{CP23-survey} for a detailed recent survey.

Denote by \ts $\mu(n)$ \ts the minimum number of elements in
a poset with \ts $n$ \ts linear extensions. See
\cite[\href{http://oeis.org/A160371}{A160371}]{OEIS} for the
numerical data (see also
\cite[\href{http://oeis.org/A281723}{A281723}]{OEIS}).
For example, $\mu(5) = 4$ \ts since $e(Z_4)=5$, where $Z_4$ is a
\defng{zigzag poset} \ts on $4$ elements (with an $N$-shaped comparability graph).

The asymptotics of \ts $\{\mu(n)\}$ \ts remains an important open problem.  Clearly,
\ts $\mu(n) \le n$ \ts since for the parallel sum of chains we have:
\ts $e(C_{n-1} \oplus C_1)=n$. In a different direction, \ts $\mu(n) = \Omega(\log n/\log \log n)$ \ts since
\ts $e(P) \le n!$ \..  The first nontrivial upper bound \ts $\mu(n) = O(\sqrt{n})$ \ts
was found by Tenner~\cite{Ten}.  Most recently, this bound was greatly improved:

\begin{thm}[{\rm Kravitz--Sah \cite[Thm~1.1]{KS21}}{}] \label{t:KS-main}
We have: \.  $\mu(n) = O(\log n \ts \log \log n)$.
\end{thm}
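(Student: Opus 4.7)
I would prove this by explicit construction: given $n$, build a poset $P$ with $e(P)=n$ and $|P|=O(\log n\log\log n)$.

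\textbf{Building blocks.} Two operations are central: the \emph{ordinal sum} $P\oplus Q$ satisfies $e(P\oplus Q)=e(P)\,e(Q)$ at additive size cost, and the \emph{disjoint union} $P+Q$ satisfies $e(P+Q)=\binom{|P|+|Q|}{|P|}\,e(P)\,e(Q)$, providing a tunable binomial multiplier that is essential for hitting \emph{exact} target values rather than merely approximate ones. Ordinal sums implement pure multiplication; disjoint unions provide fine-grained control.

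\textbf{Plan.} Set $s=C_0\log\log n$ for a large constant $C_0$. The main step is a \emph{dense coverage lemma}: every integer in $[1,2^s]$ can be realized as $e(P)$ for some poset $P$ with $|P|\le C_1 s^2$. I would prove this by strong induction on the target, with small antichains and chains as atoms and disjoint unions as the mechanism to nudge $e(P)$ to the exact integer via the binomial factor $\binom{|P|+|Q|}{|P|}$. Once this coverage is in place, for general $n$ I would decompose multiplicatively as $n=m_1\cdots m_k$ with each $m_i\in[1,2^s]$, allowing the last factor to absorb a residual via a disjoint union whose binomial coefficient is chosen to match. The final poset is (essentially) $P_{m_1}\oplus\cdots\oplus P_{m_{k-1}}\oplus(P_{m_k}+Q_{\text{corr}})$. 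Since $k=O(\log n/s)$ and each block contributes $O(s^2)$ elements, the total size is $O(ks^2)=O(s\log n)=O(\log n\log\log n)$, matching the claim.

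\textbf{Main obstacle.} The central difficulty is the coverage lemma: hitting \emph{every} integer — not merely a dense subset — in an exponentially wide range, using operations that act multiplicatively on $e$. The greedy scheme of combining the largest realizable block below the target and then correcting with a binomial multiplier from a disjoint union has the right flavor, but showing that every remainder can actually be covered requires a careful inductive argument that exploits the two degrees of freedom (the sizes $|P|,|Q|$ and the counts $e(P),e(Q)$) to navigate the discrete target set. A secondary obstacle is the multiplicative decomposition of $n$ itself: since not every integer factors neatly into pieces of size at most $2^s$, the final residual must be realizable, which circles back to the coverage lemma together with an elementary number-theoretic step ensuring the residual's size is controlled. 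Once these are in place, the size accounting and the overall construction are routine.
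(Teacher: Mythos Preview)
Your plan has a genuine gap at its heart: both operations you propose act \emph{multiplicatively} on $e(\cdot)$. The ordinal sum gives $e(P\ole Q)=e(P)\,e(Q)$, and the disjoint union gives $e(P\oplus Q)=\binom{|P|+|Q|}{|P|}\,e(P)\,e(Q)$, which is still a product. Consequently, the posets you can build from chains and antichains via these two operations are exactly the series--parallel posets, and for these $e(P)$ is always a product of binomial coefficients. Now take a prime target $m$. Any factorization $m=\binom{a+b}{a}\,e(P)\,e(Q)$ forces either $\binom{a+b}{a}=m$ (hence $\min(a,b)=1$ and $a+b=m$, so the poset already has $\ge m$ elements) or pushes the prime $m$ into $e(P)$ or $e(Q)$ unchanged. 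By induction, no series--parallel poset with $e=m$ has fewer than $m$ elements. This kills your coverage lemma for any prime $m\in(s^2,2^s]$, and since there are such primes for every $s\ge 5$ or so, the inductive scheme you sketch cannot get off the ground. The same obstruction blocks the outer decomposition $n=m_1\cdots m_k$: when $n$ itself is prime, there is no nontrivial factorization to exploit, and your ``residual'' is all of~$n$.

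What is missing is an operation that moves $e(P)$ \emph{additively}. The paper's route (following Kravitz--Sah) supplies exactly this via continued fractions: one constructs width-two posets $P$ together with a marked minimal element $x$ so that the pair $\bigl(e(P),e(P-x)\bigr)$ evolves under the Euclidean-type recurrence $(d,c)\mapsto(d+c,c)$ and $(d,c)\mapsto(d,c+d)$, realized by adjoining a single element (the hybrid sums in Section~\ref{s:rec}). Running the simple continued fraction expansion of $c/d$ backwards then produces a poset with $e(P)=d$ and $|P|=\as(c/d)$, the sum of partial quotients. The size bound $O(\log n\log\log n)$ then comes not from combinatorics but from number theory: Larcher's theorem (Theorem~\ref{t:Larcher}) guarantees, for every $d$, some coprime $c$ with $\as(c/d)=O(\log d\log\log d)$. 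Your proposal has no analogue of either ingredient --- neither the additive recursion on $e$ nor the analytic input controlling its depth --- and both are essential.
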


The authors use a simple but surprising connection to continued fractions,
the starting point of this paper (see below).  They state the following:

\begin{conj}[{\rm \cite[Conj.~7.3]{KS21}}{}]  \label{conj:KS-main}
We have: \.  $\mu(n) = O(\log n)$.
\end{conj}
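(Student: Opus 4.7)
The plan is to refine the continued-fraction construction of Kravitz--Sah (Theorem~\ref{t:KS-main}) using the new poset families from this paper to shave off the extra $\log \log n$ factor. Their argument encodes every $n$ via the continued-fraction convergent recursion \ts $p_k = a_k \ts p_{k-1} + p_{k-2}$, materializing each partial quotient $a_k$ by a chain of $a_k$ elements; the total element count is then roughly $\sum_k a_k$, which can be of order $\log n \cdot \log \log n$ in the worst case.

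The first avenue I would pursue is to replace each ``chain of length $a_k$'' block by a gadget drawn from the generalized continued fraction constructions of this paper, with the goal of encoding a single partial quotient of size $a$ using only $O(\log a)$ elements rather than $O(a)$. Combined with any standard continued-fraction expansion of $n$ having $\Theta(\log n)$ partial quotients of average logarithmic size, this would produce a poset of total size $O(\log n)$ and settle the conjecture.

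A complementary line of attack is a divide-and-conquer strategy based on binary expansion. Write $n = 2q + r$ with $r \in \{0,1\}$, and attempt to realize the relation \ts $e(P) = 2 \ts e(P_q) + r$ \ts by a single new poset operation that adds only $O(1)$ vertices on top of $|P_q|$. Iterating $O(\log n)$ times yields $\mu(n) = O(\log n)$ directly. The required gadget here is an ``affine doubler'': a constant number of extra elements attached to $P_q$ whose allowable positions within the linear extensions contribute a factor of $2$ and an additive correction of $0$ or $1$.

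The main obstacle is constructing such efficient affine operations on posets. The standard constructions (ordinal sum and disjoint union) realize only multiplicative and binomial-weighted combinations of $e$-values; neither directly produces an affine function of $e(Q)$ with small overhead. Any proof of the conjecture will therefore need a genuinely new identity between linear extension counts and integer operations, likely exploiting the generalized continued fraction machinery developed in this paper. A secondary subtlety is ensuring that the set of attainable integers at each stage is \emph{all} of $\nn$, not merely a dense subset; this should follow from a careful inductive choice of parameters analogous to the greedy selection of digits in a continued-fraction expansion, possibly augmented by a short list of ``corrector'' posets of size $O(1)$ to cover exceptional low-order residues.
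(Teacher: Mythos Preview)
The statement you are addressing is Conjecture~\ref{conj:KS-main}, which is \emph{open}: the paper does not prove it, and neither does your proposal. What you have written is a research plan, not a proof. You yourself identify the gap when you say ``The main obstacle is constructing such efficient affine operations on posets'' and ``Any proof of the conjecture will therefore need a genuinely new identity\ldots''. That missing construction is the entire content of the conjecture.

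On your two specific avenues. First, ``encode a partial quotient $a$ using $O(\log a)$ elements'': nothing in this paper does this. The GCF and RGCF constructions (Theorems~\ref{t:GCF} and~\ref{t:RGCF}) broaden the class of continued fractions one may use, so that the relevant weight to minimize becomes $\agg$ or $\ar$ rather than~$\as$; they do not compress a single quotient logarithmically. The paper's route to Conjecture~\ref{conj:KS-main} is conditional: Propositions~\ref{p:GCF-imply} and~\ref{p:RGCF-imply} reduce it to the number-theoretic Conjectures~\ref{conj:gen-asy} and~\ref{conj:rat-asy}, which remain open. Second, the ``affine doubler'' $e(R)=2\,e(P)+r$ with $O(1)$ new elements: doubling is easy (linear sum with $A_2$), but the additive correction $+1$ is not. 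All recursions available here (Lemmas~\ref{lem:rec-gen}--\ref{lem:rec-CF}) are two-term recursions in the pair $\big(e(P),\,e(P-x)\big)$, never an affine map in $e(P)$ alone; this is precisely why continued fractions, rather than binary expansions, govern the problem. Absent a concrete gadget realizing $e(R)=e(P)+1$ with $O(1)$ elements and controllable width, the binary approach does not get off the ground.
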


% While it would be excellent to resolve the conjecture
In this paper, we are mostly interested in the combinatorial aspects of
the connection between linear extensions and continued fractions,
suggesting new technical tools towards the conjecture.

\smallskip

\subsection{Simple continued fractions} \label{ss:intro-CF}
Let \. $\nn:=\{0,1,2,\ldots\}$ \. and \. $\pp:=\{1,2,\ldots\}$.
A \defnb{simple continued fraction} \ts (CF) \ts is  defined as follows:
\begin{equation}\label{eq:CF-def}
[b_0,b_1,b_2,\ldots, b_m]  \ := \  b_0 \, + \, \cfrac{1}{b_1 \, + \, \cfrac{1}{b_2\, + \, \cfrac{1}{\ \ddots \, + \, \frac{1}{b_m}} }} \ ,
\end{equation}

\smallskip

\nin
where integers \ts $b_0 \ge 0$, \ts $b_1,\ldots, b_{m-1} \ge 1$, and \ts $b_m \ge 2$ \ts for \ts $m\ge 1$.
Integers \ts $b_i$ \ts are called \defn{quotients}.  The sum of these quotients \.
$\aS(b_0,\ldots, b_m):= b_0 + \ldots + b_m$ \. is called the \defn{weight} \ts of \ts
$[b_0,\ldots, b_m]$.  Recall that for every \ts $\al \in \qqq_{\ge 0}$ \ts there is a
unique simple continued fraction \ts $[b_0,b_1,b_2,\ldots, b_m]=\al$,
and we write \. $\as(\al):=\aS(b_0,b_1,b_2,\ldots, b_m)$ \. in this case. Note that \.
$\as(\al)=\as\big(\al^{-1}\big)$.

In the terminology of \cite{YK75} (see also  \cite[$\S$4.5.3]{Knuth98}),  the weight
\ts $\as\big(\tfrac{c}{d}\big)$ \ts is the number of steps of the \defn{subtraction algorithm},
the original (classical) version of the Euclidean algorithm for finding the greatest
common divisor that uses only subtractions instead of divisions.
The following result is the key to the
proof of Theorem~\ref{t:KS-main}.

\begin{thm}[{\rm Larcher \cite{Lar86}, see also \cite[Thm~1.2]{KS21}}{}] \label{t:Larcher}
For every integer \ts $d\ge 1$, there exists an integer \ts $1\le c < d$, \. $\gcd(c,d)=1$,
such that % \. $c/d= [0,b_1,\ldots,b_m]$ \. and
\begin{equation}\label{eq:Larcher}
\asr\big(\tfrac{c}{d}\big) \, \le \, C \. \frac{d}{\phi(d)} \, \log d \, \log \log d\ts,
\end{equation}
where \ts $\phi(n)$ \ts is Euler's totient
function, and \ts $C>0$ \ts is a universal constant.
\end{thm}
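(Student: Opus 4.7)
The plan is to establish the bound via an averaging argument over residues $c$ coprime to $d$. Writing $c/d = [0; b_1, \ldots, b_m]$ for such $c$, we have $\as(c/d) = \sum_{i=1}^m b_i$, with $m = O(\log d)$ by Lam\'e's theorem. Since $\min_c \as(c/d) \le \frac{1}{\phi(d)} \sum_c \as(c/d)$, it suffices to establish
\begin{equation*}
T(d) \ := \ \sum_{\substack{1 \le c < d\\ \gcd(c,d)=1}} \as\bigl(\tfrac{c}{d}\bigr) \ = \ O\bigl(d \ts \log d \, \log \log d\bigr),
\end{equation*}
which, after dividing by $\phi(d)$, yields the desired bound with the $d/\phi(d)$ factor.

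The core estimate is obtained by switching the order of summation: $T(d) = \sum_{k \ge 1} N_k(d)$, where $N_k(d)$ counts pairs $(c, i)$ with $\gcd(c,d) = 1$ and $b_i(c/d) \ge k$. The condition $b_i(c/d) \ge k$ is equivalent to saying that the $(i{-}1)$-st convergent $p_{i-1}/q_{i-1}$ approximates $c/d$ within $1/(k \ts q_{i-1}^2)$. Counting such near-approximations via Farey fractions (or equivalently, paths in the Stern--Brocot tree terminating at a fraction with denominator $d$) gives a Gauss--Kuzmin-type estimate $N_k(d) \lesssim \phi(d) \ts \log d / k$ in the main range. The naive sum $\sum_k N_k(d)$ would then give only $O(\phi(d) \log^2 d)$, which is a factor of $\log d/\log \log d$ too large for the target.

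The refinement to the sharp $\log d \, \log \log d$ bound is the main obstacle. One must truncate the sum at $K \asymp \log d$ and treat the tail $k > K$ separately. Here the structural fact that the continuant $\prod_i b_i \le d^{O(1)}$ provides strong constraints: most $c$ coprime to $d$ have no very large partial quotient, so the tail contribution can be controlled by a sharper count of rationals with an exceptional partial quotient, saving the missing factor. In Larcher's original approach this tail estimate is obtained through analytic number theory, specifically via bounds on Kloosterman-type exponential sums related to the Estermann zeta function, and the factor $d/\phi(d)$ emerges naturally from the accompanying sieve argument when one restricts to coprime residues. A purely combinatorial substitute for the exponential-sum input, perhaps via Ostrowski representations or a direct analysis of the Stern--Brocot tree layered by denominator, would be attractive but appears difficult to make quantitatively sharp.
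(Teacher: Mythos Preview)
The paper does not prove this theorem; it is quoted from Larcher (1986), with later sharpenings discussed in~\S\ref{ss:finrem-hist}. So there is no in-paper proof to compare against, and your sketch must stand on its own.

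There is a genuine gap at the very first reduction. Your claim ``it suffices to establish $T(d) = O(d \log d \log\log d)$'' is false: as recorded in~\eqref{eq:YK} (Yao--Knuth, Panov, Liehl),
\[
\frac{1}{\phi(d)} \sum_{\substack{1\le c<d\\ \gcd(c,d)=1}} \as\bigl(\tfrac{c}{d}\bigr) \ = \ \tfrac{6}{\pi^2}\,(\log d)^2 \ + \ O\bigl((\log d)(\log\log d)^2\bigr),
\]
so in fact $T(d) = \Theta\bigl(\phi(d)(\log d)^2\bigr)$. For prime $d$ this is $\Theta\bigl(d(\log d)^2\bigr)$, which is strictly larger than $d\log d\log\log d$. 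The distribution of $\as(c/d)$ is heavy-tailed (see~\S\ref{ss:finrem-hist}): the mean is $\Theta((\log d)^2)$ while the typical value is $\Theta(\log d\log\log d)$. The minimum therefore lies far below the mean, and averaging cannot reach it.

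Your subsequent ``refinement'' via truncation at $K\asymp\log d$ is still framed as an improved upper bound on $T(d)$; but $T(d)$ genuinely has order $\phi(d)(\log d)^2$, so no sharper estimate on the counts $N_k(d)$ can make that sum smaller than it actually is. What is needed instead is a concentration or tail estimate showing that some (indeed, almost all) residues $c$ satisfy $\as(c/d)=O(\log d\log\log d)$ --- this is precisely the content of Rukavishnikova's Theorem~\ref{t:Ruka}, and is in spirit what Larcher proves. Your final paragraph gestures toward the right analytic input, but the averaging container you place it in is incompatible with the known asymptotics of the sum.
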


See $\S$\ref{ss:finrem-hist} for more on the theorem.
Now, Kravitz and Sah observed that Conjecture~\ref{conj:KS-main}
follows from the following conjectural extension of Theorem~\ref{t:Larcher}.

\begin{conj}[{\rm \cite[Conj.~7.2]{KS21}}{}]  \label{conj:KS-asy}
For every prime \ts $d$, there is an integer \ts $1\le c < d$,
such that % \. $c/d= [0,b_1,\ldots,b_m]$ \. and
\begin{equation}\label{eq:Larcher}
\asr\big(\tfrac{c}{d}\big) \, \le \, C \.  \log d\ts,
\end{equation}
where \ts $C>0$ \ts is a universal constant.
\end{conj}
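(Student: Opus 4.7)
The plan is to reduce Conjecture~\ref{conj:KS-asy} to a prime-restricted version of Zaremba's conjecture on bounded partial quotients. If $c/d = [a_0; a_1, \ldots, a_m]$ with every partial quotient $a_i \le K$ for a universal constant $K$, then the Fibonacci-type growth of convergent denominators forces $m = O_K(\log d)$, so $\as(c/d) \le Km = O(\log d)$. Thus it suffices to produce, for every prime $d$, some $c$ coprime to $d$ whose continued fraction has all partial quotients at most $K$.

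Let $\mathfrak{D}_K(N)$ denote the set of $d \le N$ admitting such a representative. The density-one result of Bourgain--Kontorovich gives $|\mathfrak{D}_{50}(N)| = N - o(N)$, but primes form a density-zero subset, so their argument must be re-examined in order to extract prime denominators. The first target would be a positive-density lower bound $|\mathfrak{D}_K(N) \cap \{\text{primes}\}| \gg \pi(N)$, which would at least yield the conjecture for a positive proportion of primes and serve as a stepping stone. Technically, I would inject the von Mangoldt weight $\Lambda(d)$ into the circle-method framework of Bourgain--Kontorovich; continuants are expanded as exponential sums over words in the alphabet $\{1, \ldots, K\}$, and the major/minor arc estimates draw their spectral input from expansion in the semigroup generated by the matrices $\begin{pmatrix} a & 1 \\ 1 & 0 \end{pmatrix}$ with $a \le K$, following the Bourgain--Gamburd--Sarnak theory of thin groups.

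The main obstacle, and the reason the conjecture remains open, is twofold. First, isolating primes inside $\mathfrak{D}_K(N)$ runs into the parity barrier of sieve theory, with potential Siegel-zero obstructions that could spoil the major-arc main term for specific primes. Second, even a positive-density statement is far from enough: promoting it to cover \emph{every} prime requires an equidistribution input much stronger than current technology provides, so some orbit-counting or explicit algebraic construction must be layered on top of the analytic input. A parallel avenue worth exploring, more in the spirit of this paper, is to translate the minimization problem into the enumeration of restricted linear extensions modulo $d$ via new poset constructions, which might yield direct combinatorial identities that bypass analytic number theory altogether, or at least pinpoint the primes for which the $O(\log d)$ bound is tight.
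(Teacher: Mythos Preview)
The statement you are addressing is a \emph{conjecture}, not a theorem: the paper does not prove it, and indeed explicitly records it as open (it is Conjecture~7.2 of Kravitz--Sah). So there is no ``paper's own proof'' to compare your proposal against.

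Your proposal is not a proof either, and to your credit you say so: you correctly frame it as a research program and name the obstructions (parity problem, Siegel zeros, the gap between positive density and \emph{every} prime). The reduction you give in the first paragraph --- bounded partial quotients force $m=O(\log d)$ and hence $\as(c/d)=O(\log d)$ --- is exactly the observation the paper makes immediately after stating the conjecture, where it notes that Conjecture~\ref{conj:KS-asy} follows from Zaremba's Conjecture~\ref{conj:Zaremba}. Your further suggestion to weight the Bourgain--Kontorovich circle method by $\Lambda(d)$ is a natural line of attack, but as you yourself point out, even a positive-proportion result for primes would not suffice, and no current technology closes that gap.

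In short: there is no gap to flag because there is no claimed proof; your write-up is an honest and well-informed sketch of why the problem is hard, consistent with the paper's own discussion.
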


Note that in a CF \eqref{eq:CF-def} for \ts $\frac{c}{d}\ts$, the number
of quotients is  \ts $m=O(\log d)$.  Thus, Conjecture~\ref{conj:KS-asy} follows from the
celebrated \defn{Zaremba's conjecture} (see also~$\S$\ref{ss:finrem-Zar}):

\begin{conj}[{\rm Zaremba \cite[p.~76]{Zar72}}{}] \label{conj:Zaremba}
For every integer \ts $d\ge 1$, there is an integer \ts $1\le c < d$,
such that \. $c/d= [0,b_1,\ldots,b_m]$ \. and \. $b_1,\ldots,b_m\le A$,
where \ts $A>0$ \ts is a universal constant.
\end{conj}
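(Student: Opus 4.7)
The plan is to translate Zaremba's conjecture into a statement about the orbit of a semigroup in $\SL_2(\zz)$, which is the framework driving essentially all known partial progress. Given a bound $A$, let $\Gc_A$ denote the multiplicative semigroup generated by the matrices
\[
M_b \, := \, \begin{pmatrix} 0 & 1 \\ 1 & b \end{pmatrix}, \qquad 1 \le b \le A.
\]
The standard recursion for CF convergents shows that if $c/d = [0,b_1,\ldots,b_m]$ with each $b_i \le A$, then $d$ appears as the bottom-right entry of the product $M_{b_1} \cdots M_{b_m} \in \Gc_A$. Consequently, the conjecture is equivalent to asking that the set $\Dc_A \subseteq \pp$ of bottom-right entries coincides with $\pp$ (or at least contains all sufficiently large integers) for some universal $A$.

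First I would pursue the Bourgain--Kontorovich strategy, which establishes the density-one version of the conjecture. The approach studies the reduction $\Gc_A \to \SL_2(\zz/d\zz)$ via super-strong approximation for thin groups and then runs the circle method: the major arcs are governed by the local distribution in $\SL_2(\zz/d\zz)$, while the minor arcs are controlled by expander estimates of Bourgain--Gamburd flavor. The critical input is that the Hausdorff dimension $\de_A$ of the limit set of $\Gc_A$ exceed a certain threshold; this is known for all sufficiently large $A$, and the threshold has been progressively lowered by Huang and collaborators. The resulting conclusion is the asymptotic $|\Dc_A \cap [1,N]| = (1-o(1))\ts N$, and a minor sharpening (restricting to prime denominators) would already yield Conjecture~\ref{conj:KS-asy} for density-one $d$.

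The main obstacle is the final local-to-global step: removing the exceptional set of density zero to obtain literal equality $\Dc_A = \pp$. The circle method controls only smooth counts and inherently leaves a small set of denominators which could, a priori, fail to be realized by any product in $\Gc_A$; ruling this out appears to require an essentially new idea. I would be inclined to look for a deterministic combinatorial construction rather than an analytic one. In the spirit of the present paper, one could hope to leverage the explicit poset constructions developed in later sections to convert Conjecture~\ref{conj:KS-asy} (which is all that is needed for $\mu(n) = O(\log n)$) into a poset-existence problem, and then exploit operations on posets to build witnesses for every denominator simultaneously. Translating a favorable poset-theoretic construction back into a uniform bound on partial quotients is the step I do not currently see how to carry out, and this is where I expect the real difficulty to lie.
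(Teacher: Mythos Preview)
This statement is Zaremba's conjecture, which is an open problem; the paper states it as Conjecture~1.6 and does not attempt a proof. Your proposal correctly summarizes the Bourgain--Kontorovich framework and its refinements, but you yourself identify the genuine gap: the circle method with expander input yields only a density-one statement, and no known argument eliminates the exceptional set. The suggestion in your final paragraph --- to use the poset constructions of this paper to attack the conjecture --- inverts the logical flow: the paper uses (consequences of) Zaremba-type statements to build posets, not the other way around, and there is no mechanism by which a poset with $e(P)=d$ and few elements forces the existence of a continued fraction for some $c/d$ with bounded partial quotients.

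In short, your write-up is an accurate survey of the state of the art, but it is not a proof and should not be presented as one; the paper makes no claim to prove this conjecture either.
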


\smallskip

\subsection{From continued fractions to linear extensions} \label{ss:intro-LE-CF}
In poset \ts $P=(X,\prec)$, an \defn{antichain} \ts is a subset of pairwise independent
elements.  The \defn{width} \ts of a poset is the size of the maximal antichain.
An element $x\in X$ is \defn{minimal}, if for every \ts $y\in X$ \ts we have either
\ts $x\preccurlyeq y$ \ts or \ts $x \.\|\.y$.
Denote by \ts $\min(P)$ \ts the set of all minimal elements in~$P$.
Denote by \ts $P-x$ \ts the poset obtained by removing the element $x$.

\begin{thm}[{\rm see \cite[Prop.~4.1]{KS21}}{}] \label{t:KS-CF}
For all integers \ts $1\le c< d$ \ts with \ts $\gcd(c,d)=1$,
%and a simple continued fraction \. $[0,b_1,\ldots,b_m]=c/d$,
there is a poset \ts $P=(X,\prec)$ \ts of width two,
such that \ts $|X|=\asr\big(\tfrac{c}{d}\big)$, \ts $e(P)=d$ \ts and \ts $e(P-x)=c$ \ts
for some minimal element \ts $x\in \min(P)$.
\end{thm}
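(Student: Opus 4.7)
The plan is to proceed by induction on the weight $w := \as(c/d)$. The base case $w=2$ forces $(c,d)=(1,2)$, and the two-element antichain has width two, $e(P)=2=d$, with either minimal element removed leaving the single-element chain with $e=1=c$.

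For the inductive step, I would use the CF expansion $c/d = [0, q_1, q_2, \ldots, q_m]$ to descend to a smaller-weight fraction $\tilde c/\tilde d$. If $q_1 \ge 2$, take $(\tilde c, \tilde d) := (c, d-c)$, whose CF is $[0, q_1-1, q_2, \ldots, q_m]$; if $q_1 = 1$ (which forces $m \ge 2$), take $(\tilde c, \tilde d) := (d-c, c)$, whose CF is $[0, q_2, \ldots, q_m]$. In either case $\gcd(\tilde c, \tilde d) = 1$, $0 < \tilde c < \tilde d$, and $\as(\tilde c/\tilde d) = w-1$, so the induction hypothesis provides a width-two poset $\tilde P$ with $|\tilde P| = w-1$, $e(\tilde P) = \tilde d$, and a designated minimal element $\tilde x \in \min(\tilde P)$ satisfying $e(\tilde P - \tilde x) = \tilde c$.

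The construction of $P$ is uniform in the two cases: let $P := \tilde P \cup \{y\}$ where $y$ is a new element with $y \prec z$ for every $z \in \tilde P \setminus \{\tilde x\}$ and $y \.\|\. \tilde x$. Any antichain of $P$ containing $y$ may only also include $\tilde x$ (the unique element of $\tilde P$ not above $y$), so $P$ has width two and $\min(P) = \{y, \tilde x\}$. To count linear extensions, I would partition $\Ec(P)$ by the element occupying position $1$: extensions starting with $y$ correspond bijectively to $\Ec(P - y) = \Ec(\tilde P)$, contributing $\tilde d$; extensions starting with $\tilde x$ must place $y$ at position $2$ since $y$ becomes the unique minimum of $P - \tilde x$, after which the tail is any extension of $\tilde P - \tilde x$, contributing $\tilde c$. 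Hence $e(P) = \tilde d + \tilde c = d$.

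Finally, the minimal element $x \in \min(P)$ with $e(P-x) = c$ is chosen according to the case: if $q_1 \ge 2$, take $x = \tilde x$ to obtain $e(P - \tilde x) = \tilde c = c$; if $q_1 = 1$, take $x = y$ to obtain $e(P - y) = \tilde d = c$. The key technical step is the $e(P)$ identity, which rests on the forcing of $y$ into position $2$ whenever $\tilde x$ occupies position $1$; this forcing holds precisely because the relations $y \prec z$ for $z \in \tilde P \setminus \{\tilde x\}$ make $y$ the unique minimum of the subposet $P - \tilde x$. Once this is verified, the two cases together cover all reduced fractions $c/d$ with $0 < c < d$, closing the induction.
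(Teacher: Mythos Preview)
Your proof is correct and follows essentially the same approach the paper attributes to \cite{KS21}: the single poset operation you use (adjoin a new element $y$ below everything except the distinguished minimal element $\tilde x$) is precisely the hybrid sum $\{y\}\ole_{\tilde x}\tilde P$ of Section~\ref{s:rec}, and your two cases---choosing $x=\tilde x$ or $x=y$---are exactly the two transformations $(P,x)\to(P',x')$ and $(P,x)\to(P'',x'')$ described after the theorem statement, recovered by picking different distinguished minimal elements of the same enlarged poset. The induction on the weight via the Euclidean step $(\tilde c,\tilde d)\in\{(c,d-c),(d-c,c)\}$ matches the paper's recursive viewpoint.
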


The proof of the theorem uses two simple transformations of posets \ts
$(P,x) \to (P',x')$ \ts and \ts $(P'',x'')$, such that for \. $e(P)=d$, $e(P-x)=c$ \.
the new posets satisfy \. $e(P') = e(P'')=c+d$, \. $e(P'-x')=c$, \. $e(P''-x'')=d-c$.
In Section~\ref{s:rec} we modify and generalize this construction.

Before we proceed to generalizations, consider
$$\cT(k) \,:= \, \big\{e(P) \, : \, P=(X,\prec), \, |X| \le k \big\},
$$
so that \. $\mu(n) =\min\{ \ts k \ts : \ts n \in  \cT(k) \ts \}$.  Open Problems~7.5
and~7.6 in~\cite{KS21} ask about the asymptotics of \ts $|\cT(k)|$,
and of the largest \ts
$L=L_c(k)$ \ts such that \ts $\big|\cT(k)\cap \{1,\ldots,L\}\big| > c\ts L$.
We have the following direct application of Theorem~\ref{t:KS-CF}
(not noticed in~\cite{KS21}),
which gives partial answers to both open problems:
%\com{SH}{Corollary 1.7 also answers Conjecture 7.3 and 7.4 in KS paper :).}

\begin{cor}\label{c:BK}  We have: \. $|\cT(k)| = \exp \Omega(k)$.
Moreover, there is a constant \ts $C>1$, such that
\begin{equation}\label{eq:BK-cor}
\frac{1}{C^k}\.\big|\ts \cT(k) \cap \big\{1,2,\ldots, \lfloor C^k\rfloor\big\} \ts\big| \, \to \, 1 \quad \text{as} \ \ k \to \infty.
\end{equation}
\end{cor}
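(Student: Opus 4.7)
\textbf{Proof plan for Corollary~\ref{c:BK}.} By Theorem~\ref{t:KS-CF}, every coprime pair $(c,d)$ with $0 < c < d$ yields a poset with $\as(c/d)$ elements and exactly $d$ linear extensions. Hence
\[
\cT(k) \ \supseteq \ \Sigma(k) \ := \ \big\{ d \ge 1 \, : \, \exists \, 1 \le c < d,\ \gcd(c,d)=1,\ \as(c/d) \le k \big\},
\]
and both assertions reduce to lower bounds on $|\Sigma(k)|$ and $|\Sigma(k) \cap [1, C^k]|$.

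For the first assertion, use the bijection $(c,d) \leftrightarrow (b_1, \ldots, b_m)$ coming from $c/d = [0, b_1, \ldots, b_m]$, which pairs the set of coprime pairs with $0<c<d$ with the set of tuples satisfying $b_1,\ldots,b_{m-1} \ge 1$, $b_m \ge 2$, $m \ge 1$. The number of such tuples with $b_1 + \cdots + b_m \le k$ is $\sum_{k'=2}^k 2^{k'-2} = 2^{k-1} - 1$. Since the denominator equals the continuant $K_m(b_1, \ldots, b_m)$ and is therefore at most the Fibonacci number $F_{k+1} = O(\phi^k)$ with $\phi = (1+\sqrt 5)/2$, while each $d \in \Sigma(k)$ is attained by at most $\phi(d) \le d$ pairs, pigeonhole yields
\[
|\cT(k)| \ \ge \ |\Sigma(k)| \ \ge \ \frac{2^{k-1} - 1}{F_{k+1}} \ = \ \Omega\big((2/\phi)^k\big) \ = \ \exp\Omega(k),
\]
since $2/\phi = \sqrt 5 - 1 > 1$.

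For the second assertion, I would invoke the celebrated density-one theorem of Bourgain and Kontorovich on Zaremba's conjecture: there is an absolute constant $A$ such that the set $D_A \subseteq \NN$ of denominators $d$ admitting some coprime $c$ with $c/d = [0, b_1, \ldots, b_m]$, $b_i \le A$ for all $i$, satisfies $|D_A \cap [1, N]|/N \to 1$ as $N \to \infty$. For $d \in D_A$ the continuant bound $d \ge \phi^{m-1}$ gives $m \le 1 + \log_\phi d$, and hence $\as(c/d) \le Am \le A + (A/\log \phi)\log d$. Fix any $C \in (1, \phi^{1/A})$, e.g.\ $C = \phi^{1/(2A)}$; then for every $d \le C^k$ one has $\as(c/d) \le A + k/2 \le k$ for $k \ge 2A$, so $D_A \cap [1, C^k] \subseteq \Sigma(k) \subseteq \cT(k)$, and the density-one conclusion transfers to give $|\cT(k) \cap [1, C^k]|/C^k \to 1$.

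The first assertion is elementary pigeonhole once one identifies coprime pairs with their CF expansions. The main (and essentially only) deep ingredient is the density-one Bourgain--Kontorovich theorem, which converts bounded partial quotients into a logarithmic bound on $\as(c/d)$ for almost all $d$; without it, only a positive-density Zaremba-type result would be available, yielding merely $\liminf > 0$ rather than convergence to $1$.
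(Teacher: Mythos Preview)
Your argument is correct.  For the second (density-one) assertion you follow exactly the route taken in the paper: invoke the Bourgain--Kontorovich density-one version of Zaremba's conjecture, convert bounded partial quotients into the bound \ts $\as(c/d)=O(\log d)$, and feed this into Theorem~\ref{t:KS-CF}.  The only difference is that you spell out the choice of the constant~$C$ explicitly, while the paper leaves ``$\as(c/d)=O(\log d)\Rightarrow$ the result'' to the reader.

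For the first assertion, however, you take a genuinely different path.  The paper simply deduces \ts $|\cT(k)|=\exp\Omega(k)$ \ts as an immediate consequence of the density-one statement (hence again via Bourgain--Kontorovich).  You instead give an elementary counting argument: the bijection between reduced fractions \ts $c/d$ \ts and CF tuples shows there are \ts $2^{k-1}-1$ \ts coprime pairs with \ts $\as(c/d)\le k$; the continuant bound \ts $d\le F_{k+1}$ \ts (which follows from the entrywise matrix inequality \ts $\bigl(\begin{smallmatrix} b & 1\\ 1 & 0\end{smallmatrix}\bigr)\le \bigl(\begin{smallmatrix} 1 & 1\\ 1 & 0\end{smallmatrix}\bigr)^{b}$ \ts for \ts $b\ge 1$) then forces, by pigeonhole, at least \ts $\Omega\bigl((2/\phi)^k\bigr)$ \ts distinct denominators.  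This avoids the deep Bourgain--Kontorovich input entirely for the first part, at the price of a worse implicit base \ts $2/\phi=\sqrt5-1\approx 1.236$.  This is worth noting: in~\S\ref{ss:finrem-count} the paper explicitly says ``it would be interesting to find an elementary proof of the first part of Corollary~\ref{c:BK}'', and your pigeonhole argument does precisely that.
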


%\com{SH}{In KS, they define $LE(k)$ to be \. $ \big\{e(P) \, : \, P=(X,\prec), \, |X| = k \big\}$\. (note the equal sign), which is slightly different from $\cT(k)$, so it becomes a bit arguable if Corollary 1.7 solves Problem 7.5 and 7.6.
%}

\begin{proof}  Recall the following remarkable result of Bourgain and Kontorovich
\cite{BK14} (see also~$\S$\ref{ss:finrem-Zar}), giving an asymptotic version of
Zaremba's Conjecture~\ref{conj:Zaremba}:
\. $\ga(n)\to 1$ \. as \. $n\to \infty$, where $\ga(n)$ \ts denotes the proportion
of $d \in \{1,\ldots,n\}$, such that \ts $c/d$ \ts has all quotients \ts $\le 50$ \ts
for some \ts $1\le c <d$, \ts $\gcd(c,d)=1$.  Since \ts $\as\big(\frac{c}{d}\big) = O(\log d)$ \ts
for such fractions, by Theorem~\ref{t:KS-CF} we obtain the result.
\end{proof}

% \begin{proof}[Sketch of proof of Theorem~\ref{t:KS-main}]
% Note that the RHS of \eqref{eq:Larcher}  is \. $O(\log d \, \log \log d)$ \ts for
% all prime~$d$.  To obtain the theorem, for every prime \. $d \ts | \ts n$,
% apply Theorem~\ref{t:Larcher} and then Theorem~\ref{t:KS-CF} to obtain a poset \ts
% $P_d$ \ts with $e(P_d)=d$.   Taking a linear sum of posets \ts $P_d$ \ts gives
% a poset \ts $P=(X,\prec)$ \ts on \ts $|X|=O(\log n \ts \log \log n)$ \ts elements, and
% with \ts $e(P)=n$ \ts linear extensions.  See \cite[$\S4.2$]{KS21} for details.
% \end{proof}

\smallskip

\subsection{Relative version} \label{ss:intro-rel}
Let \ts $P=(X,\prec)$ \ts and let \ts $x\in X$.  Following \cite{CP},
consider the \defn{relative number of linear extensions}:
\[ \ag(P,x)  \, := \,  \frac{e(P)}{e(P-x)}\..
\]
It follows from Theorem~\ref{t:KS-CF}, that every rational number
\ts $\al \ge 1$ \ts is equal to \ts $\ag(P,x)$ \ts for some poset~$P$
and element \ts $x\in X$.

For \. $d\ge c \ge 1$,
let \ts $\nu(c,d)$ \ts denote the minimal number of elements in a poset
\ts $P=(X,\prec)$, such that \ts $\rho(P,x) = \frac{d}{c}$ \ts for some \ts $x\in X$.
The following upper bound can be viewed as a relative version of Theorem~\ref{t:KS-main}.

\smallskip

\begin{thm}\label{t:CP-relative}
For all \. $d \ge 3\ts c$, we have:
\begin{equation}\label{eq:CP-rel}
\nu(c,d) \, \le \, \frac{d}{c} \. + \. O(\log d \. \log \log d).
\end{equation}
\end{thm}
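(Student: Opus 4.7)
The plan is to construct $P$ as a disjoint union of a large antichain (contributing most of the $d/c$ elements with ratio equal to its own size) and a small ``gadget'' that corrects the ratio to the exact target $d/c$. We may assume $\gcd(c,d) = 1$: dividing both by $\gcd(c,d)$ preserves the ratio $d/c$ and does not worsen the log-log overhead. Write $d = qc + r$ with $q = \lfloor d/c \rfloor \geq 3$ and $0 \leq r < c$, so $\gcd(c,r) = 1$. The easy case $r = 0$ forces $c = 1$ and $d = q$, and an antichain on $q$ elements has $\rho(P,x) = q = d/c$ for every $x$, giving $\nu(c,d) \leq q = d/c$.

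In the generic case $r \geq 1$, let $P = A \sqcup B$ with $A$ an antichain of size $q$ and $B$ a small gadget poset containing a distinguished element $x$. The standard shuffle formula for linear extensions of disjoint unions gives
\[
\rho(P, x) \,=\, \frac{|A|+|B|}{|B|}\,\rho(B, x) \,=\, \frac{q + |B|}{|B|}\,\rho(B, x),
\]
so achieving $\rho(P, x) = d/c$ reduces to realizing $\rho(B, x) = \frac{|B|\,d}{(q + |B|)\,c}$. The remaining task is thus to construct such a gadget $B$ with $|B| = O(\log d \log\log d)$ elements.

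To construct $B$, I plan to use Theorem~\ref{t:Larcher} to choose a gadget size $k := |B|$ within the budget $O(\log d\log\log d)$ for which the reduced form of $\alpha_k := \frac{kd}{(q+k)c}$ has $\as$-weight at most $k$; Theorem~\ref{t:KS-CF} then supplies a width-two gadget $B$ realizing $\rho(B, x) = \alpha_k$ on exactly $k$ elements. Combining with the antichain $A$ yields $|P| = q + k \leq d/c + O(\log d\log\log d)$ and $\rho(P, x) = d/c$, as required.

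The main obstacle is the selection step for $k$: within the budget $k \leq O(\log d \log\log d)$, one must find at least one value for which $\alpha_k$ admits a simple continued fraction of weight $\leq k$. Controlling the $\as$-weight of the reduced $\alpha_k$ along the one-parameter family of rationals $\{\tfrac{kd}{(q+k)c}\}_{k\geq 1}$ is where the technical heart of the proof lies, and resolving it requires a Larcher-type density argument specifically tailored to this family (or, alternatively, a more flexible gluing between $A$ and $B$ that expands the set of achievable targets $\alpha_k$ beyond the strictly disjoint-union case).
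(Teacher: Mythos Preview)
Your proposal has a genuine gap at exactly the point you flag: the selection of $k$. The difficulty is structural. In your family $\alpha_k = \tfrac{kd}{(q+k)c}$, both numerator and denominator move with $k$ in a coupled way; the reduced denominator can be of order $d$, and the numerator is forced once $k$ is chosen. Larcher's theorem (Theorem~\ref{t:Larcher}) and Rukavishnikova's theorem (Theorem~\ref{t:Ruka}) both control $\as(\cdot)$ for a \emph{fixed} denominator over a \emph{freely varying} numerator; neither says anything about a one-parameter family of rationals with moving denominators. You have only $O(\log d\log\log d)$ values of $k$ available, and nothing you cite rules out an adversarial pair $(c,d)$ for which every $\alpha_k$ in that window has $\as(\alpha_k)\gg k$. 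So the argument does not close. (A minor secondary issue: Theorem~\ref{t:KS-CF} yields a gadget of size exactly $\as(\alpha_k)$, not $k$; this is repairable by padding with a linear sum $C_m\ole B$, which preserves $\rho(B,x)$, but you should say so explicitly.)

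The paper avoids this obstruction by using an \emph{additive} rather than multiplicative decomposition. A flip-flop construction (Lemma~\ref{l:rec-relative}) takes posets realizing $\rho=\alpha$ and $\rho=\beta$ and produces one realizing $\rho=\alpha+\beta$ on the union of their ground sets. One then writes
\[
\tfrac{d}{c}\;=\;\alpha+\beta,\qquad \alpha=1+\tfrac{\ell}{c},\qquad \beta=\big\lfloor\tfrac{d}{c}\big\rfloor-2+\tfrac{c+r-\ell}{c},
\]
where $r=d-\lfloor d/c\rfloor c$ and $\ell\in\{1,\dots,c-1\}$ is a \emph{free} parameter. Both tails $\tfrac{\ell}{c}$ and $\tfrac{c+r-\ell}{c}$ have denominator dividing $c$, so Rukavishnikova's tail estimate applies directly: for most $\ell$ both have $\as$-weight $O(\log c\log\log c)$, hence a good $\ell$ exists. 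The integer part $\lfloor d/c\rfloor-2$ is absorbed into $\beta$ at a cost of that many elements (via a chain), and the two small-weight fractional parts cost $O(\log d\log\log d)$ elements each by Theorem~\ref{t:KS-CF}. The crucial feature your scheme lacks is that here the denominator stays fixed at $c$ while the numerator $\ell$ ranges freely --- exactly the setting in which the density input bites.
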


\smallskip

In \cite[Prop.~8.8]{CP}, we showed an asymptotically matching lower bound:
\begin{equation}\label{eq:CP-rel-lower}
\nu(c,d) \, \geq \, \frac{d}{c}\..
\end{equation}
The motivation  of Theorem~\ref{t:CP-relative} comes from the approach
in \cite{CP23a}, where we studied relative versions of several
counting functions (domino tilings, spanning trees, etc.)
The proof of Theorem~\ref{t:CP-relative} is based
on the approach in \cite[$\S$8.2]{CP}.
It would be interesting to see if the condition \ts $d\ge 3 \ts c$ \ts
can be weakened to \ts $d \ge (1+\ve) \ts c$ \ts or even
dropped.  Additionally, by analogy with the Kravitz--Sah
Conjecture~\ref{conj:KS-main}, we conjecture that \eqref{eq:CP-rel}
can be improved to
\begin{equation}\label{eq:rel-conj}
	\nu(c,d) \, \le \, \tfrac{d}{c} \. + \. O(\log d).
\end{equation}
In a different direction, one can ask about the smallest size poset
with \ts $e(P)=d$ \ts and \ts $e(P-x)=c$, since the construction
in the proof can result in an integer multiple of both.

The key part of the proof of Theorem~\ref{t:CP-relative} is the following tail estimate for
the weight of random continued fractions:

\begin{thm}[{\rm Rukavishnikova \cite{Ruka}}{}]\label{t:Ruka}
There is a universal constant \ts $C>0$, such that
\begin{equation}\label{eq:Ruka}
    \frac{1}{d} \, {\ }\#\left\{ c \in [d] \. : \. \left|\asr\big(\tfrac{c}{d}\big) \. - \. \tfrac{12}{\pi^2} \. \log d \. \log\log d\right| \,
    > \,  (\log d)(\log \log d)^{2/3}\right\} \ < \frac{C}{(\log \log d)^{1/3}}\,.
%  \quad \text{as \ \  $n\to \infty$}.
\end{equation}
\end{thm}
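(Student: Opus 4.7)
The plan is to apply Chebyshev's inequality to the random variable \ts $X := \asr(c/d)$ \ts with $c$ uniform on $\{1,\ldots,d\}$. Writing \ts $\mu_d := \tfrac{12}{\pi^2}\log d \log\log d$ \ts and \ts $\delta_d := (\log d)(\log\log d)^{2/3}$, one has
$$\tfrac{1}{d}\.\#\big\{c \in [d] \. : \. |X - \mu_d| > \delta_d\big\} \ \le \ \frac{\mathrm{Var}(X)}{\delta_d^2}\,,$$
so the task reduces to establishing the mean estimate \ts $\ee[X] = \mu_d + O(\log d)$ \ts together with the variance bound \ts $\mathrm{Var}(X) = O\big((\log d)^2 \log\log d\big)$. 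Combining these, the right-hand side becomes \ts $O(1)/(\log\log d)^{1/3}$, which is exactly the announced tail bound.

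The mean estimate is the classical theorem of Yao and Knuth \cite{YK75}, whose proof recursively groups \ts $c \in [d]$ \ts by the first partial quotient \ts $b_1 = \lfloor d/c\rfloor$ \ts and reduces the sum to a Dirichlet-type convolution; the constant \ts $12/\pi^2$ \ts emerges as the product of the coprime density \ts $6/\pi^2$ \ts with the factor \ts $2\log 2$ \ts coming from the truncated Gauss--Kuzmin first-moment sum \ts $\sum_{k\le T}\ts k\log_2(1+\tfrac{1}{k(k+2)}) \sim \log_2 T$, evaluated at the natural scale \ts $T$ \ts set by the typical number \ts $m \asymp \log d$ \ts of quotients.

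For the variance, the fundamental difficulty is that individual quotients have extremely heavy tails: for uniform \ts $c \in [d]$, the first partial quotient satisfies \ts $\Pr[b_1 = k] \asymp 1/k^2$, whence \ts $\mathrm{Var}(b_1) \asymp d$ --- already far larger than the variance we hope to prove for the entire sum \ts $X = \sum_i b_i$. The saving must come from the global constraint \ts $\prod_i b_i \lesssim d$ \ts on the partial quotients of a fraction with denominator \ts $d$, which forbids simultaneous largeness. The strategy I would follow is to split each quotient \ts $b_i = b_i\.\mathbf{1}_{b_i \le T} + b_i\.\mathbf{1}_{b_i > T}$ \ts at a threshold \ts $T$ \ts of order \ts $\log d \ts \log\log d$: bound the truncated sum via approximate pairwise independence of \ts $(b_i, b_j)$ \ts (coming from the spectral gap of the Gauss transfer operator, transferred to the rational setting), and bound the tail sum via pair-counting on configurations with two large quotients, using the product constraint to show these are rare.

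The main obstacle is the pair-counting estimate \ts $\#\{c \in [d] : b_i(c/d) > T_1,\ b_j(c/d) > T_2\} \lesssim d/(T_1 T_2)$ \ts with sharp constants, which is where analytic number theory enters: Kloosterman-style bounds on incomplete sums over coprime pairs in the tradition of Heilbronn, Porter, and Hensley. This is the core technical content of Rukavishnikova's proof in \cite{Ruka}, and the particular \ts $(\log\log d)^{2/3}$ \ts shape of the deviation arises from optimizing the threshold \ts $T$ \ts against the tradeoff between the truncated variance and the tail contribution.
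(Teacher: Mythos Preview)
The paper does not prove this theorem; it is quoted from Rukavishnikova \cite{Ruka} as a black box, so there is no ``paper's own proof'' to compare against.  That said, your sketch contains a genuine error worth flagging.

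Your opening move is to center Chebyshev at \ts $\mu_d = \tfrac{12}{\pi^2}\log d\,\log\log d$ \ts and claim that \ts $\ee[X]=\mu_d+O(\log d)$ \ts is ``the classical theorem of Yao and Knuth''.  This is not what Yao--Knuth proved.  Their result (also obtained by Panov and Liehl; see \eqref{eq:YK} in the final remarks) is
\[
\frac{1}{\phi(d)}\sum_{\gcd(c,d)=1}\as\big(\tfrac{c}{d}\big)\;=\;\tfrac{6}{\pi^2}(\log d)^2\;+\;O\big((\log d)(\log\log d)^2\big),
\]
so the mean of \ts $X$ \ts is of order \ts $(\log d)^2$, not \ts $\log d\,\log\log d$.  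The distribution is heavy-tailed: the typical value \ts $\mu_d$ \ts around which \eqref{eq:Ruka} asserts concentration is an order of magnitude smaller than the mean.  Consequently your Chebyshev bound \ts $\mathrm{Var}(X)/\delta_d^2$ \ts is centered at the wrong point, and the claimed variance estimate \ts $\mathrm{Var}(X)=O\big((\log d)^2\log\log d\big)$ \ts is also false --- the variance is at least as large as \ts $(\ee[X]-\mu_d)^2\asymp(\log d)^4$.

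Your later paragraph does gesture at the fix: one must truncate the quotients \emph{before} taking moments, replacing \ts $X$ \ts by \ts $X_T:=\sum_i \min(b_i,T)$ \ts for a threshold \ts $T$, and then run a second-moment argument on \ts $X_T$ \ts while separately controlling the contribution of large quotients.  But this truncation is not a minor correction to your scheme; it is the whole point, and the constant \ts $\tfrac{12}{\pi^2}$ \ts arises as the mean of the \emph{truncated} sum, not of \ts $X$ \ts itself.  As written, the first two paragraphs of your proposal do not survive this.
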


\smallskip

\nin
Here we are stating a special case of the main theorem in \cite{Ruka}
which suffices for our purposes.
%\newpage

\smallskip

\subsection{Generalized continued fractions} \label{ss:intro-GCF}
Let \ts $m \geq 0$, \ts $a_1,\ldots, a_m \in \pp$, \ts $b_0,\ldots, b_m \in \pp$.
A \defnb{generalized continued fraction} \ts (GCF) \ts is defined as
\begin{equation}\label{eq:GCF-def}
[a_1,\ldots, a_m; b_0,\ldots, b_m]  \ := \  b_0 \, + \, \cfrac{a_1}{b_1 \, + \, \cfrac{a_2}{b_2 \, + \,
\cfrac{a_3}{\ \ddots \, + \, \frac{a_{m}}{b_m}} }} \ .
\end{equation}
Note that when \.  $a_1=\ldots=a_m=1$ \. we get a simple continued fraction.
We define the  \defnb{weight} of GCFs as follows:
\[
\aG(a_1,\ldots,a_m \ts ; \ts b_0,\ldots, b_m)  \, := \, (b_0+\ldots +b_m) \. - \. (a_1 +\ldots + a_m) \. + \. m,
\]
and note that \. $\aG(1,\ldots,1 \ts ; \ts b_0,\ldots, b_m) = \aS(b_0,\ldots, b_m)$.
Observe that a rational number can have many presentations as a GCF, some of which
can have weight smaller than the weight of the corresponding CFs.  For example,
\[ \frac{20}{7} \ = \  2+ \cfrac{1}{1+\frac{1}{6}}  \ = \   2+ \cfrac{2}{2+\frac{1}{3}} \ , \]
so \ts $\as\big(\tfrac{20}{7}\big) = \aS(2,1,6) = 9$ \ts and \ts $\aG(2,1\ts;\ts2,2,3) = 6$.

A generalized continued fraction \eqref{eq:GCF-def} is called \defn{balanced} \ts % (BCF)
\ts if  % \ts $b_0 \ge a_1-1$, \. $b_m \ge a_m-1$,
\begin{equation}\label{eq:GCF-balanced}
	b_i \, \geq \, a_i + a_{i+1} -1 \quad \text{for all} \quad 0 \. \le \. i \. \le \. m\ts,
\end{equation}
where by convention we assume that \ts $a_0=a_{m+1}=1$.
Clearly, every simple continued fraction of \ts $\al \in \qqq_{\ge 1}$ \ts is balanced.
The following is the GCF analogue of Theorem~\ref{t:KS-CF}.

\begin{thm}\label{t:GCF}
Let \ts $m \geq 0$\ts, \ts $a_1,\ldots, a_m \in \pp$\ts, \ts $b_0,\ldots, b_m \in \pp$ \ts
be integers satisfying \eqref{eq:GCF-balanced}.
Then there exists a poset \ts $P=(X,\prec)$ \ts of width at most three,
and a minimal element \ts $x \in \min(P)$, such that \. $|X| = \aGr(a_1,\ldots,a_m \ts ; \ts b_0,\ldots,b_m)$,
and
$$
[a_1,\ldots, a_m \ts ; \ts b_0,\ldots, b_m] \, = \, \ag(P,x)\ts,
$$
where \. $[a_1,\ldots, a_m \ts ; \ts b_0,\ldots, b_m]$ \. is a balanced GCF defined in \eqref{eq:GCF-def}.
\end{thm}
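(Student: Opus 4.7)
My plan is to prove Theorem~\ref{t:GCF} by induction on $m$, the depth of the GCF.

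In the base case $m=0$, the GCF reduces to $b_0$, and I would take $P = C_{b_0-1} \sqcup \{x\}$, the disjoint union of a chain of length $b_0-1$ with an isolated element $x$. Then $e(P) = b_0$, $e(P-x) = 1$, $|P| = b_0$, $x \in \min(P)$, $P$ has width $2$, and $\ag(P,x) = b_0$, matching the statement.

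For the inductive step, I would assume that $(Q,y)$ of width $\le 3$ realizes the inner GCF $\rho_1 := [a_2,\ldots,a_m;b_1,\ldots,b_m] = p/q$ with $|Q| = \aGr(a_2,\ldots,a_m;b_1,\ldots,b_m)$, where $p = e(Q)$ and $q = e(Q-y)$. I then construct $(P,x)$ realizing
\[
\rho_0 \. := \. b_0 \. + \. \frac{a_1}{\rho_1} \. = \. \frac{b_0\ts p + a_1\ts q}{p}
\]
by adjoining $b_0 - a_1 + 1$ new elements to $Q$: a chain $u_1 \prec \cdots \prec u_{b_0 - a_1}$ placed entirely below $Q$ (so $u_{b_0-a_1} \prec z$ for every $z \in Q$), together with a new minimal element $x$ that is incomparable to $y$ and to each $u_i$, but related by cover relations to a carefully chosen subset of $Q \setminus \{y\}$. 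I would then compute $e(P)$ by conditioning on which of the minimal elements $u_1$ or $x$ occupies position~$1$ of the linear extension. When $u_1$ is first, the remaining chain is forced into the next $b_0 - a_1 - 1$ positions and the rest is a linear extension of $Q \cup \{x\}$, contributing $(b_0-1)\ts p + a_1\ts q$ by a sub-identity; when $x$ is first, the entire chain is forced next and the rest is a linear extension of $Q$, contributing $p$. Summing yields $e(P) = b_0\ts p + a_1\ts q$, and a parallel analysis gives $e(P-x) = p$, so $\ag(P,x) = \rho_0$ as required. Width at most $3$ is preserved since the chain contributes at most one element to any antichain, and $x$'s cover relations are arranged to preclude any antichain of size~$4$.

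The main obstacle will be specifying the precise cover relations of $x$ with $Q$ so that the sub-identity $e(Q \cup \{x\}) = (b_0 - 1)\ts p + a_1\ts q$ holds for every inductively constructed $(Q,y)$. In the simple case $a_1 = 1$, it suffices to set $x \prec z$ for every $z \in Q \setminus \{y\}$, recovering the Kravitz--Sah step $\rho \to 1 + 1/\rho$ followed by $\rho \to \rho + 1$ iterated $b_0 - 1$ times. For $a_1 \ge 2$, a more intricate ``multi-cover'' placement of $x$ is required, and the balanced condition $b_0 \ge a_0 + a_1 - 1$ plays an essential role: it ensures both that the chain length $b_0 - a_1$ is nonnegative and, more importantly, that $Q$ admits enough combinatorial ``slack'' near $y$ to accommodate the required cover relations. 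I expect the inductive hypothesis may need to be strengthened to record refined structural data about $(Q,y)$ in the vicinity of $y$ (for instance, the distribution of $y$'s position in uniform linear extensions of $Q$); verifying the sub-identity for all balanced GCFs is the combinatorial heart of the proof.
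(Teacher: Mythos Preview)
Your base case and the overall inductive scaffolding match the paper's proof, and your treatment of the case $a_1=1$ is correct (it recovers the Kravitz--Sah construction). However, for $a_1\ge 2$ the proposal has a genuine gap: you never specify the ``multi-cover'' relations of $x$, and you yourself flag that the inductive hypothesis ``may need to be strengthened'' and that establishing the sub-identity is ``the combinatorial heart of the proof.'' That heart is missing. There is also a local error in your conditioning: when $u_1$ is first, the rest of the chain is \emph{not} forced into the next positions, since $x$ is incomparable to every $u_i$ and can be interleaved with them. More fundamentally, adding a single element $x$ with relations into $Q\setminus\{y\}$ makes $e(Q\cup\{x\})$ depend on the internal structure of $Q$, not merely on $e(Q)$ and $e(Q-y)$; so a uniform construction of the kind you sketch cannot exist without carrying extra structural data through the induction.

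The paper sidesteps this entirely by \emph{not} implementing the step $\rho\mapsto b_0+a_1/\rho$ directly. Instead it realizes, via an explicit hybrid-sum construction (Lemma~\ref{lem:rec-CF}), the step
\[
\rho \ \longmapsto \ b_0 \, + \, \frac{a_1}{\,a_1-1+\rho\,}\,,
\]
by adjoining $Q=\{y\}\oplus C_{a_1-1}$ below $P$ (keeping $x$ free) and then a chain $C_{b_0-a_1}$ below that (keeping $y$ free). To compensate for the extra $a_1-1$ in the denominator, the induction is applied not to $[a_2,\ldots,a_m;\,b_1,b_2,\ldots,b_m]$ but to $[a_2,\ldots,a_m;\,b_1',b_2,\ldots,b_m]$ with $b_1':=b_1-a_1+1$; the balanced condition $b_1\ge a_1+a_2-1$ guarantees $b_1'\ge a_2$, so the shifted GCF is again balanced. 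The element count works because this step adds $b_0$ elements while the shift removes $a_1-1$ from $|X'|$, netting $b_0-a_1+1$ as required. This ``shift the inner quotient'' trick is the missing idea in your proposal; once you have it, no strengthening of the inductive hypothesis is needed and the width bound follows immediately from the hybrid-sum formulas.
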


For \ts $\al \in \qqq_{\ge 1}$\ts, define
$$\agg(\al) \, := \, \min \big\{\.\aG(a_1,\ldots, a_m \ts ; \ts b_0,\ldots, b_m) \, : \, [a_1,\ldots, a_m \ts ; \ts b_0,\ldots, b_m] \ts = \ts \al \. \big\},
$$
where the minimum is over all balanced GCF \eqref{eq:GCF-balanced} such that all
partial fractions \ts $\frac{C_i}{D_i}$ \ts are reduced, i.e.\ $\gcd(C_i,D_i)=1$ \ts for all \ts
$1\le i \le m$ (see the definition in~$\S$\ref{ss:def-CF}).
For example, if \ts $a_1=\ldots=a_m=r$ \ts for some integer \ts $r\ge 1$, and integers
\ts $b_1,\ldots,b_m$ are coprime to~$r$, then all partial fractions \ts $\frac{C_i}{D_i}$ \ts
are reduced.  In particular, this condition automatically holds for all simple CFs. %, but not always for GCFs.
From above, we have \. $\agg(\al) \le \as(\al)$.  Thus, the following conjecture is a
natural weakening of Conjecture~\ref{conj:KS-asy}.

\begin{conj}  \label{conj:gen-asy}
For every prime \ts $d$, there is an integer \ts $1\le c < d$,
such that % \. $c/d= [0,b_1,\ldots,b_m]$ \. and
\begin{equation}\label{eq:Larcher-gen}
\aggr\big(\tfrac{d}{c}\big) \, \le \, C \.  \log d\ts,
\end{equation}
where \ts $C>0$ \ts is a universal constant.  % Moreover, the claim holds for every integer \ts $d\ge 1$.
\end{conj}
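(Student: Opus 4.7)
My plan is to reduce Conjecture~\ref{conj:gen-asy} to a generalized Zaremba-type statement, and then attack the reduced problem by exploiting the extra flexibility of the parameters $a_i$.

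\textbf{Step 1 (Reduction).} I would first show that it suffices to construct, for every prime $d$, some $1 \le c < d$ together with a balanced GCF $[a_1,\ldots,a_m;b_0,\ldots,b_m] = d/c$ with reduced partial fractions and with all $a_i, b_i \le K$ for a universal constant $K$. The reduction is immediate from the convergent recurrence $Q_i = b_i Q_{i-1} + a_i Q_{i-2} \ge Q_{i-1} + Q_{i-2}$, which forces $Q_m \ge F_m$ (the $m$-th Fibonacci number); since $Q_m = c < d$, this gives depth $m = O(\log d)$, and then
\[
	\aG \, = \, \sum_{i=0}^m b_i \, - \, \sum_{i=1}^m a_i \, + \, m \, \le \, Km + K \, = \, O(\log d).
\]

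\textbf{Step 2 (GCF-Zaremba for primes).} The core task is to produce such bounded-quotient balanced GCFs. The extra freedom to set $a_i > 1$ should make this question more tractable than the original Zaremba Conjecture~\ref{conj:Zaremba}. A natural first attempt is a \emph{compression} strategy: begin with a fraction $c/d$ guaranteed by Larcher's Theorem~\ref{t:Larcher} with $\as(c/d) \le C \log d \log \log d$; identify long runs of small simple-CF quotients (predominantly $1$'s, i.e., near-Fibonacci regions); and coalesce each such run of length $\ell$ into a single GCF step via the identity
\[
	\cfrac{1}{1+\cfrac{1}{1+\cfrac{1}{\ \ddots \. + \. \frac{1}{1}}}} \, = \, \frac{F_{\ell-1}}{F_\ell}\ts,
\]
absorbing the run into a pair $(a,b) \approx (F_{\ell-1}, F_\ell)$. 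If correctly balanced, this replaces $\Theta(\ell)$ units of simple-CF weight by $O(1)$ GCF weight, which is exactly the $\log \log d$ factor to be saved. Alternatively, one can try an analytic attack: estimate the Hausdorff dimension of the Cantor set of reals admitting balanced GCF expansions with bounded partial quotients, and if it exceeds $1/2$, adapt the Bourgain--Kontorovich circle method (cf.\ the proof of Corollary~\ref{c:BK}) to establish the bound for a density-1 set of primes.

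\textbf{Main obstacle.} The principal difficulty is Step 2 for \emph{every} prime rather than merely density-1: analytic sieve methods are inherently statistical, and even the original Zaremba Conjecture~\ref{conj:Zaremba} is open. The compression strategy is elementary but must preserve both the balance condition $b_i \ge a_i + a_{i+1} - 1$ and the reducedness of partial fractions; each coalescing step typically inflates $b_i$ in proportion to $a_i$, potentially eliminating the savings, and the structure of Larcher's optimal $c/d$ does not obviously contain the long near-Fibonacci runs required. A full resolution of Conjecture~\ref{conj:gen-asy} may therefore need a genuinely new idea specific to prime denominators, rather than a mere strengthening of Zaremba-type methods.
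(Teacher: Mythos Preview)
The statement you are attempting to prove is a \emph{conjecture}; the paper does not prove it and does not claim to. The only result the paper establishes about Conjecture~\ref{conj:gen-asy} is Proposition~\ref{p:GCF-imply}, namely that it implies the Kravitz--Sah Conjecture~\ref{conj:KS-main}. So there is no ``paper's own proof'' to compare your proposal against.

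As a strategy outline, your Step~1 reduction is sound (bounded partial quotients $a_i,b_i\le K$ in a balanced GCF with reduced tails force depth $m=O(\log d)$ via the tail recursion $C_i=b_iC_{i+1}+a_{i+1}C_{i+2}\ge C_{i+1}+C_{i+2}$, hence $d=C_0\ge F_{m}$, and then $\aG=O(m)=O(\log d)$). But your Step~2 is not a proof: it is a list of two plausible attack lines (Fibonacci-run compression, Bourgain--Kontorovich-type dimension methods), each accompanied by the honest admission that it faces the same fundamental barrier as Zaremba's conjecture itself. In particular, your compression idea requires that Larcher's optimal $c$ produce a simple CF with enough long runs of $1$'s to absorb the entire $\log\log d$ excess, and there is no reason to expect this structure; and even if it did, you have not shown how to coalesce such a run while simultaneously maintaining the balance condition \eqref{eq:GCF-balanced} \emph{and} the reducedness of all partial tails, which is exactly the constraint that makes $\agg$ nontrivial. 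Your own final paragraph correctly identifies this as the genuine gap. What you have written is a reasonable research plan, not a proof, and the conjecture remains open.
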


From Theorem~\ref{t:GCF}, we have:

\begin{prop}\label{p:GCF-imply}
Conjecture~\ref{conj:gen-asy} implies Conjecture~\ref{conj:KS-main}.
\end{prop}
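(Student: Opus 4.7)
The plan is to combine Conjecture~\ref{conj:gen-asy} with Theorem~\ref{t:GCF} to first establish \. $\mu(p)=O(\log p)$ \. for every prime \ts $p$, and then to bootstrap to arbitrary \ts $n$ \ts using ordinal sums. Recall that for the ordinal sum \ts $P_1+P_2$ \ts one has \. $e(P_1+P_2)=e(P_1)\ts e(P_2)$ \. and \. $|P_1+P_2|=|P_1|+|P_2|$. Consequently, factoring \ts $n=q_1q_2\cdots q_k$ \ts into primes with multiplicity and forming the ordinal sum \ts $P_{q_1}+\cdots+P_{q_k}$ \ts yields a poset with \ts $e(P)=n$ \ts whose size equals \. $\sum_j |P_{q_j}|$; since \. $\sum_j \log q_j=\log n$\ts, reducing to the prime case is enough.

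For a fixed prime \ts $p$, Conjecture~\ref{conj:gen-asy} supplies an integer \ts $1\le c<p$ \ts (automatically coprime to~$p$) with \. $\aggr(p/c)\le C\log p$. Unwinding the definition of \ts $\agg$, I obtain a balanced GCF \. $[a_1,\ldots,a_m\ts;\ts b_0,\ldots,b_m]=p/c$ \. whose partial fractions are all reduced and whose weight \. $\aGr(a_1,\ldots,a_m\ts;\ts b_0,\ldots,b_m)\le C\log p$. Feeding this GCF into Theorem~\ref{t:GCF} produces a poset \ts $P_p$ \ts of width at most three and size at most \ts $C\log p$, together with a minimal element \ts $x_p\in\min(P_p)$ \ts satisfying \. $\ag(P_p,x_p)=p/c$.

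The step that needs the most care is ensuring that the construction actually delivers \. $e(P_p)=p$ \. (and not merely the ratio \. $e(P_p)/e(P_p-x_p)=p/c$\ts). I would verify this by tracing the inductive construction behind Theorem~\ref{t:GCF} level by level: after processing \. $[a_1,\ldots,a_i\ts;\ts b_0,\ldots,b_i]$\ts, the intermediate poset should carry \. $(e,e-x)=(C_i,D_i)$, where \. $C_i/D_i$ \. is the \ts $i$-th partial convergent. The reducedness condition \ts $\gcd(C_i,D_i)=1$ \ts baked into the definition of \ts $\agg$ \ts is precisely what prevents a spurious common multiplier from accumulating along the recursion, so the final values are \. $e(P_p)=C_m=p$ \. and \. $e(P_p-x_p)=D_m=c$. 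Granting this, \. $\mu(p)\le C\log p$, and the ordinal-sum step above upgrades this to \. $\mu(n)\le C\log n$ \. uniformly in \ts $n$ \ts (absorbing the finitely many small primes into the constant), proving Conjecture~\ref{conj:KS-main}.
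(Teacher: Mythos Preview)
Your proposal is correct and matches the paper's proof: handle primes via Theorem~\ref{t:GCF} together with the reduced-partial-fractions condition in the definition of $\agg$ to force $e(P_p)=p$, then extend to all $n$ by taking linear (ordinal) sums over the prime factorization. One small correction: the partial fractions in this paper are the \emph{tails} $C_i/D_i=[a_{i+1},\ldots,a_m;b_i,\ldots,b_m]$ and the induction in Theorem~\ref{t:GCF} runs from the back, so the final poset has $e(P_p)=C_0$ and $e(P_p-x_p)=D_0$ (not $C_m,D_m$), and it is $\gcd(C_0,D_0)=1$ that yields $e(P_p)=p$.
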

\smallskip

\subsection{Rational GCFs} \label{ss:intro-k-GCF}
We call a continued fraction of the form \eqref{eq:GCF-def} \defn{rational} \ts if $a_i \in \qqq_{\ge 1}$.
A rational generalized continued fraction (RGCF) is called \defn{balanced} \ts if it is of the form
% Note that \. $\aS^{(k)}(\alpha) \in \nn$.
\begin{equation}\label{eq:RGCF-def}
b_0 +  \alpha_1  + \, \cfrac{\alpha_1}{\as(\alpha_1)-1 +b_1+\alpha_2+ \, \cfrac{\al_2}{\as(\al_2)-1+ b_2+\al_3+ \, \cfrac{\al_3}{\ \ddots \, + \, \frac{\al_{m}}{\as(\al_m)-1+b_{m}}} }} \ ,
\end{equation}
where \. $\alpha_1,\ldots, \alpha_m \in  \qqq_{\ge 1}$ \. and \.
$b_0,\ldots, b_{m}  \in \nn$ \. s.t.\ $b_m\ge 1$.
We use \. $[\alpha_1,\ldots, \alpha_m \ts; \ts b_0,\ldots, b_m]$ \. to denote this RGCF.

Note that for \. $\al_1,\ldots,\al_m \in \pp$, this is a balanced GCF, since the inequalities \eqref{eq:GCF-balanced}
are automatically satisfied.  Denote by
\[  \aR(\alpha_1,\ldots, \alpha_m \ts; \ts b_0,\ldots, b_m) \, := \,  b_0 + \ldots + b_m  \. + \.
\as(\alpha_1)  + \ldots + \as(\alpha_m)
\]
the \defn{weight} \ts of \eqref{eq:RGCF-def}.  For example, take \ts $m=1$, \. $\al_1=\frac32$\ts,
\. $b_0=1$, \ts $b_1=3$.  Then
$$
\as\big(\tfrac32\big) \. = \. 3, \quad
\big[\tfrac32 \. ; \ts 1,3\big] \. = \. 1 + \tfrac32 \. + \. \frac{\tfrac32}{\as\big(\tfrac32\big)-1+3} \. = \. \tfrac{14}{5}
\quad \text{and} \quad \aR\big(\tfrac32 \. ; \ts 1,3\big) \. = \. 1 + 3 + \as\big(\tfrac32\big) \. = \. 7.
$$

The following result is a variation of Theorem~\ref{t:GCF} to RGCF:

\begin{thm}\label{t:RGCF}
Let \ts $m \geq 0$, \ts $\al_1,\ldots, \al_m \in \qqq_{\ge 1}$ \ts and \ts $b_0,\ldots, b_m \in \pp$.
Then there exists a poset \ts $P=(X,\prec)$ \ts of width at most three,
and a minimal element \ts $x \in \min(P)$, such that \. $|X| = \aRr(\al_1,\ldots,\al_m \ts ; \ts b_0,\ldots,b_m)$,
and
$$
[\al_1,\ldots,\al_m \ts ; \ts b_0,\ldots,b_m] \, = \, \ag(P,x)\ts,
$$
where \. $[\al_1,\ldots,\al_m \ts ; \ts b_0,\ldots,b_m]$ \. is a balanced
RGCF defined in \eqref{eq:RGCF-def}.
%\com{SH}{We only need width three for this result, and width four is only needed if we change from $\as(\alpha)$ to $\agg(\alpha)$. Currently the statement of the theorem is written for $\as(\alpha)$, but proof is for $\agg(\alpha)$. I am fine with either one, and I will then change it to the version you prefer.}
\end{thm}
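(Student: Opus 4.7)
\textbf{Proof proposal for Theorem~\ref{t:RGCF}.}

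The plan is to proceed by induction on $m$, closely paralleling the proof of Theorem~\ref{t:GCF} but using Theorem~\ref{t:KS-CF} to realize each rational $\al_i$ by a small width-$2$ ``plug-in'' poset. The operations used will be variants of the two Kravitz--Sah transformations from the proof of Theorem~\ref{t:KS-CF}.

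For the base case $m=0$, the RGCF equals $b_0$ and $\aR=b_0$. Take $P$ to be a chain $v_1\prec\cdots\prec v_{b_0-1}$ together with one incomparable marked minimum $x$. Then $|P|=b_0$, $e(P)=b_0$, $e(P-x)=1$, so $\ag(P,x)=b_0$, and $\width(P)\le 2$.

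For the inductive step, set $D_1=[\al_2,\ldots,\al_m;\as(\al_1)-1+b_1,b_2,\ldots,b_m]$, a balanced RGCF of depth $m-1$, and note that the value of the full RGCF is $T_0=b_0+\al_1(D_1+1)/D_1$, while $\aR(D_1)=\aR-b_0-1$. By the inductive hypothesis, there is a width-$\le 3$ poset $Q$ with $|Q|=\aR-b_0-1$ and a minimum $w\in\min(Q)$ such that $\ag(Q,w)=D_1$. By Theorem~\ref{t:KS-CF}, there is a width-$2$ poset $R$ with $|R|=\as(\al_1)$ and a minimum $y\in\min(R)$ such that $\ag(R,y)=\al_1$. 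I would then define $P'$ to be the poset on $Q\cup R$ (with the $\as(\al_1)-1$ ``bookkeeping'' elements of $Q$ identified with $R\setminus\{y\}$ via a strengthened inductive hypothesis discussed below) in which $y\|w$ but every element of $R\setminus\{y\}$ is placed below every element of $Q\setminus\{w\}$. Mark $y$. This is a direct generalization of Kravitz--Sah's Op~B: the role of the single added element $x'\prec P-x$ is now played by the entire sub-poset $R$, with $y$ taking the place of $x'$. A direct counting of linear extensions (decomposing by the position of $y$ relative to $w$ and using that $\ag(R,y)=\al_1$) gives $\ag(P',y)=\al_1\cdot(D_1+1)/D_1$. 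Finally, apply Op~A of Kravitz--Sah (add an incomparable minimum $x'$ with $x'\prec z$ for all $z\ne y$, and keep the mark $y$, which becomes labelled $x$ after $b_0$ iterations) a total of $b_0$ times, each step adding one element and incrementing $\ag$ by $1$. The resulting $P$ satisfies $|P|=(\aR-b_0-1)+1+b_0=\aR$ and $\ag(P,x)=b_0+\al_1(D_1+1)/D_1=T_0$, as required.

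The main obstacle is the bookkeeping in the inductive step: because the $\as(\al_1)-1$ elements inside $Q$ that play the role of $R\setminus\{y\}$ are created during the recursive construction of $D_1$ (through its leading parameter $\as(\al_1)-1+b_1$), the inductive statement must be strengthened to track not just $\ag(Q,w)=D_1$ and $|Q|=\aR(D_1)$ but also a distinguished width-$2$ sub-poset of $Q$ isomorphic to $R\setminus\{y\}$, together with its placement relative to $w$. I expect this strengthened inductive hypothesis to be exactly what makes both the $\ag$-formula and the width bound work simultaneously. The width bound $\width(P)\le 3$ is the second obstacle: $Q$ has width $\le 3$ and $R$ has width $\le 2$, and the cross-relations ``$R\setminus\{y\}$ below $Q\setminus\{w\}$'' must be chosen so that no new antichain of size~$4$ is created. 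Because $R\setminus\{y\}$ lies entirely below $Q\setminus\{w\}$ and $y$ is incomparable only to $w$, any antichain in $P'$ either lies entirely inside $Q$, entirely inside $R$, or consists of $y$ together with an antichain of $Q$ avoiding the upward closure of $w$; each case gives size at most $3$, and the subsequent Op~A additions do not increase the width. Finally, the formula $\ag(P',y)=\al_1(D_1+1)/D_1$ should be verified by a short computation checking that for non-integer $\al_1$ the internal constraint on $y$ inside $R$ propagates through the gluing to produce the correct numerator and denominator; for integer $\al_1$ the RGCF coincides with a balanced GCF of the same weight, and I would invoke Theorem~\ref{t:GCF} directly, which handles this degenerate case uniformly.
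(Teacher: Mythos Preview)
Your overall plan (induction on $m$, realize $\al_1$ by a small width-$2$ poset via Theorem~\ref{t:KS-CF}, glue it to the inductive poset by a hybrid-sum-type operation, then add $b_0$ via Op~A) is exactly the paper's strategy. The gap is in how you set up the inductive call.

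You apply induction to the \emph{shifted} tail $D_1=[\al_2,\ldots,\al_m;\as(\al_1)-1+b_1,b_2,\ldots,b_m]$, which already has $\as(\al_1)-1$ built into its first $b$-parameter and hence into its weight. When you then bring in the KS poset $R$ for $\al_1$, which itself has $\as(\al_1)$ elements, you double-count $\as(\al_1)-1$ elements. Your proposed fix is to \emph{identify} $R\setminus\{y\}$ with $\as(\al_1)-1$ ``bookkeeping'' elements inside the inductive poset $Q$, and you note that this forces a strengthened inductive hypothesis tracking a distinguished sub-poset of $Q$ isomorphic to $R\setminus\{y\}$. This is a genuine gap: the structure of $R\setminus\{y\}$ depends on the continued-fraction expansion of $\al_1$, which the recursive call (building $Q$) knows nothing about, so there is no reason $Q$ should contain a copy of $R\setminus\{y\}$ sitting in the right place relative to $w$. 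You neither state the strengthened hypothesis precisely nor show it can be maintained, and without it both the size count and the formula $\ag(P',y)=\al_1(D_1+1)/D_1$ are unproved.

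The paper avoids all of this by applying induction to the \emph{un-shifted} tail $[\al_2,\ldots,\al_m;b_1,\ldots,b_m]$, obtaining $P'$ with $\ag(P',x')=C_1/D_1$ and $|P'|=b_1+\cdots+b_m+\as(\al_2)+\cdots+\as(\al_m)$. It then takes the KS poset $Q$ for $\al_1$ with $|Q|=\as(\al_1)$ and forms $C_{b_0}\ole_y(Q\ole_{x'}P')$ via Lemma~\ref{l:most-general}. No identification is needed: the posets are disjoint, and the $\as(\al_1)-1$ term appears automatically as $|Q|-1$ in the formula
\[
\ag(R,z)\;=\;b_0+\ag(Q,y)\Big(1+\frac{1}{|Q|-1+\ag(P',x')}\Big)\;=\;b_0+\al_1+\frac{\al_1}{\as(\al_1)-1+C_1/D_1}\,,
\]
while the sizes add to exactly $b_0+\as(\al_1)+|P'|=\aR$. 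The width bound $\le 3$ is immediate from $\width(R)\le\max\{\width(P'),\width(Q)+1,2\}$. So the correct move is simply to drop the shift in your inductive call; then your Op~B generalization becomes precisely the hybrid sum $Q\ole_{x'}P'$ and everything closes without any strengthened hypothesis.
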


For \ts $\be \in \qqq_{\ge 1}$\ts, define
$$\ar(\be) \, := \, \min \big\{\.\aR(\al_1,\ldots, a_m \ts ; \ts b_0,\ldots, b_m) \, : \, [\al_1,\ldots, \al_m \ts ; \ts b_0,\ldots, b_m] \ts = \ts \be \. \big\},
$$
where the minimum is over all RGCF \eqref{eq:RGCF-def} such that all
partial fractions \ts $\frac{C_i}{D_i}$ \ts are reduced
(see the definition in~$\S$\ref{ss:def-CF}).
%  : \. $\gcd(C_i,D_i)=1$ \ts for all \ts
% $1\le i \le m$ (see the notation in~$\S$\ref{ss:def-CF}).
From above, \. $\ar(\al) \le \as(\al)$.
Thus, the following conjecture is a natural weakening of both
Conjecture~\ref{conj:KS-asy} and Conjecture~\ref{conj:gen-asy}.

\begin{conj}  \label{conj:rat-asy}
For every prime \ts $d$, there is an integer \ts $1\le c < d$,
such that % \. $c/d= [0,b_1,\ldots,b_m]$ \. and
\begin{equation}\label{eq:Larcher-improve}
\arr\big(\tfrac{d}{c}\big) \, \le \, C \.  \log d\ts,
\end{equation}
where \ts $C>0$ \ts is a universal constant. % Moreover, the claim holds for every integer \ts $d\ge 1$.
\end{conj}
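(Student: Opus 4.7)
The plan is to reduce the conjecture to a variant of Larcher's theorem with improved constants. The first step is immediate: by Theorem~\ref{t:Larcher} applied to a prime $d$, we have $\as(c/d) \le C(d/\phi(d)) \log d \log \log d$ for some coprime $c \in [1,d-1]$. Since $d/\phi(d) = d/(d-1) \le 2$ for prime $d$, and since $\ar(\al) \le \as(\al)$ for every $\al \in \qqq_{\ge 1}$, this yields $\ar(d/c) = O(\log d \log \log d)$, falling short of the conjectured bound by a factor of $\log \log d$.

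To eliminate this factor, I would exploit the flexibility of rational $\alpha_i \in \qqq_{\ge 1}$, which are restricted to $\alpha_i = 1$ in the simple CF setting. The Rukavishnikova tail estimate (Theorem~\ref{t:Ruka}) identifies the excess $\as(c/d) - (12/\pi^2) \log d \log \log d$ as concentrated in a relatively small number of atypically large partial quotients $b_i$. The key idea is that each atypical quotient $b_i = B$ contributes $B$ to $\as$, but a localized block of the simple CF expansion can potentially be re-encoded by a single RGCF step with rational $\alpha_i$ of weight $\as(\alpha_i) = O(\log B)$, since $\as(p/q) \ge \log_2 \max(p,q)$ is the only general lower bound. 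A careful bookkeeping with roughly $\log \log d$ such replacements should plausibly reduce the total RGCF weight to $O(\log d)$ for a density-one set of denominators~$d$.

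The main obstacle is controlling the prime case, where density results alone are insufficient. A natural two-stage strategy combines (i) an RGCF analog of the Bourgain--Kontorovich argument~\cite{BK14}, using the enlarged representation space of rational GCFs for additional flexibility, with (ii) a sieve-theoretic upgrade from density-one to \emph{every} prime. Stage (ii) is the critical bottleneck: in the simple CF setting it is essentially equivalent to Conjecture~\ref{conj:KS-asy}, which is itself open. The hope is that the larger representation space of rational GCFs provides enough slack to make the prime case tractable even when the corresponding simple CF question is not, though realizing this appears to require genuinely new arithmetic input --- plausibly in the form of explicit RGCF identities compressing long runs of partial quotients, or a Zaremba-type conjecture along primes restricted to small $\alpha_i$.
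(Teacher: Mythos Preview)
This statement is a \emph{conjecture} in the paper, not a theorem; the paper does not prove it and does not claim to. There is no ``paper's own proof'' to compare against. What the paper does with Conjecture~\ref{conj:rat-asy} is (i) observe that $\ar(\al)\le\as(\al)$, so it is formally weaker than Conjecture~\ref{conj:KS-asy}, and (ii) show (Proposition~\ref{p:RGCF-imply}) that it would imply Conjecture~\ref{conj:KS-main}.

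Your proposal is not a proof either, and you are candid about this: you reduce the question to eliminating a $\log\log d$ factor, sketch a heuristic for why rational~$\alpha_i$ might help, and then explicitly identify the prime case as a ``critical bottleneck'' that ``appears to require genuinely new arithmetic input.'' That is an honest research outline, but it is not a proof attempt in any meaningful sense --- the hard step is precisely the one you flag as open. In particular, your suggestion that large quotients $b_i=B$ can be ``re-encoded by a single RGCF step with rational $\alpha_i$ of weight $\as(\alpha_i)=O(\log B)$'' is unsupported: the RGCF recursion in~\eqref{eq:RGCF-def} does not obviously allow one to replace an arbitrary tail of a simple CF by a short rational step while preserving the value and the reduced-partial-fraction condition needed in the definition of $\ar$. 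Without a concrete mechanism for this compression, the outline does not advance beyond the $O(\log d\,\log\log d)$ bound already available from Larcher/Rukavishnikova.
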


To motivate the conjecture, note that \ts $\ar(\be)$ \ts can be much
smaller than \ts $\as(\be)$.  Take, for example, \ts $m=1$, \ts
$\al_1 = \frac{13}{7}$ \ts and \ts $\be =\frac{173}{56}$.  We have:
$$\al_1 \. = \. 1 + \cfrac{1}{1+\frac{1}{6}} \ , \quad
\as(\al_1) \. = \, 8\,, \quad
\be \. = \. 3 + \cfrac{1}{11+\frac{1}{5}}  \. = \. 1 + \al_1 + \cfrac{\al_1}{s(\al_1)-1+1}  \. = \. [\al_1\ts;\ts 1,1]\..
$$
Thus, \. $\as(\be) = \agg(\be) = 19$ \. while \. $\ar(\be) \le \aR(\al_1\ts;\ts 1,1) = 10$ \. in this case.
Again, by Theorem~\ref{t:RGCF} we have:

\smallskip

\begin{prop} \label{p:RGCF-imply}
Conjecture~\ref{conj:rat-asy} implies Conjecture~\ref{conj:KS-main}.
\end{prop}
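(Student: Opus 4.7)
The plan is to deduce Conjecture~\ref{conj:KS-main} from Conjecture~\ref{conj:rat-asy} in two steps: first reduce $\mu(n) = O(\log n)$ to the case of prime $n$ by a multiplicativity argument, and then handle the prime case by combining Conjecture~\ref{conj:rat-asy} with Theorem~\ref{t:RGCF}.

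For the reduction to primes, I invoke the fact that for posets $R, S$, the ordinal (series) sum $R \oplus S$ satisfies $e(R \oplus S) = e(R) \cdot e(S)$ and $|R \oplus S| = |R| + |S|$. Thus, if for every prime $p$ there exists a poset $Q_p$ with $e(Q_p) = p$ and $|Q_p| \le C \log p$, then for any $n$ with prime factorization $n = p_1^{a_1} \cdots p_k^{a_k}$, the ordinal sum of $a_i$ copies of $Q_{p_i}$ (over all $i$) has exactly $n$ linear extensions and total size at most $C \sum_i a_i \log p_i = C \log n$, so $\mu(n) = O(\log n)$. For the prime case itself, Conjecture~\ref{conj:rat-asy} supplies $1 \le c < p$ together with a balanced RGCF $[\al_1, \ldots, \al_m; b_0, \ldots, b_m] = p/c$ (with all partial fractions reduced) of weight $\aR(\al_1,\ldots,\al_m;b_0,\ldots,b_m) \le C \log p$; Theorem~\ref{t:RGCF} then yields a poset $P$ of width at most three with $|P| \le C \log p$ and a minimal element $x$ satisfying $\ag(P, x) = p/c$.

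The main obstacle is that Theorem~\ref{t:RGCF} as stated controls only the ratio $e(P)/e(P-x) = p/c$, whereas concluding $\mu(p) \le |P|$ requires the stronger equality $e(P) = p$ (so that I may take $Q_p := P$). I expect this to follow from the internal structure of the recursive construction behind Theorem~\ref{t:RGCF}: in analogy with the transformations used to prove Theorem~\ref{t:KS-CF} (where $e(P) = d$ and $e(P-x) = c$ hold on the nose), the construction should yield $e(P)$ and $e(P-x)$ equal to the numerator and denominator obtained by evaluating the RGCF step by step \emph{without} cancellation. The hypothesis in the definition of $\ar$ that all partial fractions $C_i/D_i$ be reduced is precisely what forces this step-by-step computation to terminate with numerator $p$ and denominator $c$, rather than with a common multiple $pk, ck$. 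Verifying this exactness---essentially a refinement of the statement of Theorem~\ref{t:RGCF}, extracted by tracking unreduced numerators and denominators through the inductive construction---is the principal technical step.
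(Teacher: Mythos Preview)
Your proposal is correct and follows essentially the same route as the paper: reduce to prime $d$ via linear sums (note the paper writes $\ole$ for this, not $\oplus$), then for prime $d$ invoke Conjecture~\ref{conj:rat-asy} together with Theorem~\ref{t:RGCF}, using the reduced-partial-fractions hypothesis in the definition of $\ar$ to force $e(P)=d$ rather than a multiple. The refinement you flag as the ``principal technical step'' requires no extra work: the inductive proof of Theorem~\ref{t:RGCF} already establishes $e(P)=C_0$ and $e(P-x)=D_0$ on the nose, so the reduced condition $\gcd(C_0,D_0)=1$ together with $C_0/D_0=d/c$ and $d$ prime immediately gives $e(P)=d$.
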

%\begin{ex}
%\end{ex}

\smallskip

\subsection{Paper structure} \label{ss:intro-structure}
We recall poset theoretic definitions and notation in
Section~\ref{s:def}.  Recursive constructions of posets
are studied in Section~\ref{s:rec}.  We present the proofs in
Section~\ref{s:proofs}.  We conclude with final remarks and
open problems in Section~\ref{s:finrem}.

%%%%%%%%%%%%%%%%%%%%%%%%%%%%%%%%%%%%%%%%%%%%%%%%%%%%%%%%%%%%%%%%%%%%%%%%%%%%%%%%%%%%%%%%%%%%%%%%%
%%%%%%%%%%%%%%%%%%%%%%%%%%%%%%%%%%%%%%%%%%%%%%%%%%%%%%%%%%%%%%%%%%%%%%%%%%%%%%%%%%%%%%%%%%%%%%%%%
%%%%%%%%%%%%%%%%%%%%%%%%%%%%%%%%%%%%%%%%%%%%%%%%%%%%%%%%%%%%%%%%%%%%%%%%%%%%%%%%%%%%%%%%%%%%%%%%%
%%%%%%%%%%%%%%%%%%%%%%%%%%%%%%%%%%%%%%%%%%%%%%%%%%%%%%%%%%%%%%%%%%%%%%%%%%%%%%%%%%%%%%%%%%%%%%%%%
%%%%%%%%%%%%%%%%%%%%%%%%%%%%%%%%%%%%%%%%%%%%%%%%%%%%%%%%%%%%%%%%%%%%%%%%%%%%%%%%%%%%%%%%%%%%%%%%%
%%%%%%%%%%%%%%%%%%%%%%%%%%%%%%%%%%%%%%%%%%%%%%%%%%%%%%%%%%%%%%%%%%%%%%%%%%%%%%%%%%%%%%%%%%%%%%%%%
%%%%%%%%%%%%%%%%%%%%%%%%%%%%%%%%%%%%%%%%%%%%%%%%%%%%%%%%%%%%%%%%%%%%%%%%%%%%%%%%%%%%%%%%%%%%%%%%%
%%%%%%%%%%%%%%%%%%%%%%%%%%%%%%%%%%%%%%%%%%%%%%%%%%%%%%%%%%%%%%%%%%%%%%%%%%%%%%%%%%%%%%%%%%%%%%%%%
%%%%%%%%%%%%%%%%%%%%%%%%%%%%%%%%%%%%%%%%%%%%%%%%%%%%%%%%%%%%%%%%%%%%%%%%%%%%%%%%%%%%%%%%%%%%%%%%%

\medskip

\section{Basic definitions and notation} \label{s:def}

\subsection{Posets} \label{ss:def-posets}
For a poset \ts $P=(X,\prec)$ \ts and a subset \ts $Y \ssu X$, denote
by \ts $P_Y=(Y,\prec)$ \ts a \defn{subposet} \ts of~$P$.  We use \ts
$(P-z)$ \ts to denote a subposet \ts $P_{X-z}$\ts, where \ts $z\in X$.
Element \ts $x\in X$ \ts is \defn{minimal} \ts in~$\ts P$, if there
exists no element  \ts $y \in X-x$ \ts such that \ts $y \prec x$\ts.
Denote by \ts $\min(P)$ \ts the set of minimal elements in~$P$.

In a poset \ts $P=(X,\prec)$, elements \ts $x,y\in X$ \ts are called
\defn{incomparable} if \ts $x\not\prec y$ \ts
and \ts $y \not \prec x$.  We write \. $x\parallel y$ \. in this case.
An \defn{antichain} \ts is a subset \ts $A\ssu X$ \ts of pairwise incomparable elements.
The \defn{width} of poset  \ts $P=(X,\prec)$, denoted \ts $\width(P)$,
is the size of a maximal antichain.
A \defn{chain} \ts is a subset \ts $C\ssu X$ \ts of pairwise
comparable elements.  Denote by  \ts $A_n$ \ts and \ts $C_n$ \ts the
antichain and the chain with \ts $n$ \ts elements, respectively.

A \defn{dual poset} \ts is a poset \ts $P^\ast=(X,\prec^\ast)$, where
\ts $x\prec^\ast y$ \ts if and only if \ts $y \prec x$.
A \defn{parallel sum} \ts $P \oplus Q$ \ts of posets \ts $P=(X,\prec)$ \ts
and \ts $Q=(Y,\prec')$ \. is a poset \ts $(X\cup Y,\prec^\di)$,
where the relation $\prec^\di$ coincides with $\prec$ and $\prec'$ on
$X$~and~$Y$, and \. $x\.\|\. y$ \. for all \ts $x\in X$, $y\in Y$.
A \defn{linear sum} \ts $P\ole Q$ \ts of posets \ts $P=(X,\prec)$ \ts
and \ts $Q=(Y,\prec')$ \. is a poset \ts $(X\cup Y,\prec^\di)$,
where the relation $\prec^\di$ coincides with $\prec$ and $\prec'$ on
$X$~and~$Y$, and \. $x\prec^\di y$ \. for all \ts $x\in X$, $y\in Y$.

Note that \ts $e(P^\ast)=e(P)$, \ts $e(P \ole Q)=e(P) \. e(Q)$ \ts and
\ts $e(P\oplus Q) = \binom{n+n'}{n} \. e(P)\. e(Q)$, where \ts $|X|=n$
\ts and \ts $|Y|=n'$.
We refer to \cite[Ch.~3]{Sta-EC} for an accessible introduction,
and to surveys \cite{BrW,CP23-survey,Tro} for further definitions and standard
results.

\smallskip

\subsection{Continued fractions}\label{ss:def-CF}
Consider a GCF \. $[a_1,\ldots, a_m  \ts ; \ts  b_0,\ldots, b_m]$ \. given by \eqref{eq:GCF-def}.
Recursively define \. $C_i:=C_i(a_1,\ldots,a_m \ts ; \ts b_0,\ldots, b_m)$ \. and \.
$D_i := D_i(a_1,\ldots,a_m \ts ; \ts b_0,\ldots, b_m)$,  \.  $0 \le i \le m$,
 as follows:
\begin{align*}
	&C_m \ := \  b_m \,, \qquad D_m \ := \  1,\\
	& D_i  \ := \  C_{i+1}\,, \qquad C_{i} \ := \  b_{i} \. D_i  \. + \. a_{i+1} \. D_{i+1}.
\end{align*}
It is  easy to see by induction that
\[ [a_{i+1},a_{i+2}, \ldots, a_m \ts ; \ts  b_i, b_{i+1}, \ldots, b_m] \, = \, \frac{C_i}{D_i} \, . \]
These are called \defn{partial fractions} or \defn{tails of continued fractions}.
Note that for simple CFs we have \ts $\gcd(C_i,D_i)=1$, but this does not always hold for GCFs.

Similarly, consider a RGCF \. $[\alpha_1,\ldots, \alpha_m \ts; \ts b_0,\ldots, b_m]$ \.
given by \eqref{eq:RGCF-def}.  Let \ts $\al_i = c_i/d_i$ \ts where \ts $\gcd(c_i,d_i)=1$\ts, \ts $1 \le i \le m$\ts.
Recursively define \. $C_i:=C_i(\al_1,\ldots,\al_m \ts ; \ts b_0,\ldots, b_m)$ \.
and \. $D_i := D_i(\al_1,\ldots,\al_m \ts ; \ts b_0,\ldots, b_m)$ \.
 as follows:
\begin{align*}
	&C_m \ := \  b_m \,, \qquad D_m \ := \  1,\\
	& D_{i} \ := \  d_{i+1} \. \big(C_{i+1} \. + \. (\as(\al_{i+1}) -1 ) \. D_{i+1} \big), \\
    &  C_{i} \ := \  b_{i} \. D_{i}  \. + \. c_{i+1} \.\big(  C_{i+1} \. + \. \as(\al_{i+1}) \. D_{i+1} \big).
\end{align*}
It is  easy to see by induction that
\[ [\al_{i+1},\al_{i+2}, \ldots, \al_m \ts ; \ts  b_i, b_{i+1}, \ldots, b_m] \, = \, \frac{C_i}{D_i} \, . \]
The ratios \ts $\frac{C_i}{D_i}$ \ts are called \defn{partial fractions} \ts in this case.
%\newpage

%%%%%%%%%%%%%%%%%%%%%%%%%%%%%%%%%%%%%%%%%%%%%%%%%%%%%%%%%%%%%%%%%%%%%%%%%%%%%%%%%%%%%%%%%%%%%%%%%
%%%%%%%%%%%%%%%%%%%%%%%%%%%%%%%%%%%%%%%%%%%%%%%%%%%%%%%%%%%%%%%%%%%%%%%%%%%%%%%%%%%%%%%%%%%%%%%%%
%%%%%%%%%%%%%%%%%%%%%%%%%%%%%%%%%%%%%%%%%%%%%%%%%%%%%%%%%%%%%%%%%%%%%%%%%%%%%%%%%%%%%%%%%%%%%%%%%
%%%%%%%%%%%%%%%%%%%%%%%%%%%%%%%%%%%%%%%%%%%%%%%%%%%%%%%%%%%%%%%%%%%%%%%%%%%%%%%%%%%%%%%%%%%%%%%%%
%%%%%%%%%%%%%%%%%%%%%%%%%%%%%%%%%%%%%%%%%%%%%%%%%%%%%%%%%%%%%%%%%%%%%%%%%%%%%%%%%%%%%%%%%%%%%%%%%
%%%%%%%%%%%%%%%%%%%%%%%%%%%%%%%%%%%%%%%%%%%%%%%%%%%%%%%%%%%%%%%%%%%%%%%%%%%%%%%%%%%%%%%%%%%%%%%%%
%%%%%%%%%%%%%%%%%%%%%%%%%%%%%%%%%%%%%%%%%%%%%%%%%%%%%%%%%%%%%%%%%%%%%%%%%%%%%%%%%%%%%%%%%%%%%%%%%
%%%%%%%%%%%%%%%%%%%%%%%%%%%%%%%%%%%%%%%%%%%%%%%%%%%%%%%%%%%%%%%%%%%%%%%%%%%%%%%%%%%%%%%%%%%%%%%%%
%%%%%%%%%%%%%%%%%%%%%%%%%%%%%%%%%%%%%%%%%%%%%%%%%%%%%%%%%%%%%%%%%%%%%%%%%%%%%%%%%%%%%%%%%%%%%%%%%

\medskip

\section{Recursive constructions}\label{s:rec}

\subsection{Hybrid sums}\label{ss:rec-def}
Let \ts $P=(X,\prec)$ \ts and \ts $Q=(Y,\prec')$ \ts be posets on \ts $|X|=n$ \ts
and \ts $|Y|=n'$ \ts elements.   Fix \ts $x \in \min(P)$.
The \defnb{hybrid sum} \. $Q \ole_x P$ \.  is the poset \. $R=(X \cup Y,\prec^\di)$ \.
given by the relations
\begin{align*}
u \. \prec^\di \. u' \quad & \text{ for every } \quad u\. \prec \. u', \ \ u,\ts u' \in  X, \\
v \. \prec^\di \. v' \quad & \text{ for every } \quad v\. \prec' \. v', \ \ v,\ts v' \in Y, \\
v \. \prec^\di  \.  u  \quad & \text{ for every } \quad  u \in X-x, \ \ v \in Y.
\end{align*}
Note that $x$ is incomparable to \. $Y$ \. in \ts $R$, and thus $x\in \min(R)$.

We have:
\begin{equation*}
	e(Q \ole_x P) \ = \  e(Q) \. e(P) \, + \,  e(Q\oplus x) \. e(P-x)  \, -  \, e(Q)  \. e(P-x).
\end{equation*}
Indeed, the term \. $e(Q) \. e(P)$ \. counts  linear extensions \ts $f\in \Ec(R)$ \ts for
which \. $f(x) \geq n'+1$. Similarly, the term  \. $e(Q\oplus x)   \. e(P-x)$ \. counts
\ts $f\in \Ec(R)$ \ts for which \. $f(x) \leq n'+1$.  Finally, the term \.
$e(Q)  \. e(P-x)$ \. counts \ts $f\in \Ec(R)$ \ts  for which \. $f(x) = n'+1$.
Because \. $e(Q \oplus x)  = (n'+1) \cdot e(Q)$, we then have
\begin{equation}\label{eq:hybrid-1}
	e(Q \ole_x P) \ = \  e(Q) \. e(P) \ + \  n' \cdot e(Q) \. e(P-x).
\end{equation}
%\com{SH}{The equation \eqref{eq:hybrid-1} is the most important thing to this approach, and everything will fail if it falls. Do you mind double-checking if this is correct? Thanks!}
%
%\smallskip
%
%We also have
%\begin{equation}\label{eq:hybrid-2}
%	e\big((Q \ole_x P)-x \big) \ = \  e(Q) \. e(P-x),
%\end{equation}
%because \. $(Q \ole_x P)-x$ \.  is the series sums of \. $Q$\., \.$\{z\}$\., and \. $P-x$.
%
%\smallskip

It then follows that for all \ts $y \in \min(Q)$, we have
\begin{equation}\label{eq:hybrid-2}
\begin{split}
	e(R-y)  \, = \, e\big((Q -y)\. \ole_x P \big) \,
	  = \,   e(Q-y) \. e(P) \. + \,  (n'-1) \cdot e(Q-y) \. e(P-x).
\end{split}
\end{equation}
%because the term \. $e(Q-y) \. e(P)$ \. counts linear extensions $f$ for which
%$f(x) > f(z)$, while \. $ e\big((Q-y)\oplus x\big) \. e(P-x)$ \. counts
%$f$ 's for which
%$f(x) < f(z)$.
Since \. $(Q \ole_x P)-x \. = \. Q \ole (P-x)$,  we also have:
\begin{equation}\label{eq:hybrid-3}
	e\big((Q \ole_x P)-x \big) \, = \,  e(Q) \. e(P-x).
\end{equation}

Finally, note that
	\begin{equation}\label{eq:width}
	\begin{split}
	  \width(Q \ole_x P) \ \leq  \   \max\big\{\. \width(P)-1, \. \width(Q) \. \big\} \. + \. 1.
	\end{split}
\end{equation}

\begin{rem}
Hybrid sum is a special case of the \defnb{quasi-series composition} \ts
defined similarly in \cite{HJ85} for general subsets of minimal elements.
%, where  \ts $P_1\gets Q$, \ts $A\gets Y$\ts, \ts $P_2 \gets P$\ts, \ts $B\gets X-\{x\}$.
Also, when \ts $Y=\{y\}$, we have \ts $R=\{y\} \oplus_{y,x}  P$,
where \ts $\oplus_{y,x}$ \ts is the direct sum operation defined in \cite[$\S$2]{KS21}.
\end{rem}
\smallskip

\subsection{Properties of hybrid sums}\label{ss:rec-prop}
We now use hybrid sums to construct posets for which the numbers of
linear extensions satisfy recurrence relations emulating continued
fractions.

\smallskip

\begin{lemma}\label{lem:rec-gen}
Let \ts $P=(X,\prec)$ \ts and \ts $Q=(Y,\prec')$ \ts be posets on \ts $n=|X|$ \ts
and \ts $n'=|Y|$ \ts elements, and let \ts
$x \in \min(P)$, \. $y \in \min(Q)$.
Let \ts $R=(Z,\prec^\di)$ \ts be a poset and let \ts $z \in \min(R)$ be given by 
\[R \ := \  Q \ole_x P, \qquad  z \ := \ y. \]
Then we have 
\begin{align*}
		e(R) \ &= \  e(Q) \. \big( e(P) \ + \  n' \cdot  e(P-x)\big),  \\
		e(R-z) \ &= \  e(Q-y) \. \big( e(P) \ + \  (n'-1) \cdot  e(P-x)\big),\\
		 |Z| \ &= \ n \. + \. n',  \\
		   \width(R) \ &\leq \  \max\big\{\ts\width(P),\. \width(Q)+1\ts\big\}.
	\end{align*}
Additionally, we have:
	\[ \agr(R,z) \ = \ \agr(Q,y) \bigg( 1 \. + \. \frac{1}{n'-1 \. +  \. \agr(P,x)  }\bigg). \]

\end{lemma}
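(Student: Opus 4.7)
The plan is to treat this lemma as a direct bookkeeping consequence of the identities for hybrid sums derived immediately above its statement. Nothing new needs to be invented; the work is to assemble equations \eqref{eq:hybrid-1}, \eqref{eq:hybrid-2}, \eqref{eq:hybrid-3} and \eqref{eq:width} and read off the claim.

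First I would dispense with the easy assertions. Since $R=Q\ole_x P$ is defined on the disjoint union $X\cup Y$, we immediately have $|Z|=n+n'$. The width bound \eqref{eq:width} gives $\width(R)\le\max\{\width(P)-1,\width(Q)\}+1=\max\{\width(P),\width(Q)+1\}$, which is exactly the stated inequality. Next, \eqref{eq:hybrid-1} reads
\[
e(R)\,=\,e(Q)\,e(P)\,+\,n'\,e(Q)\,e(P-x)\,=\,e(Q)\bigl(e(P)+n'\cdot e(P-x)\bigr),
\]
giving the first displayed formula.

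For the formula for $e(R-z)$ I would invoke \eqref{eq:hybrid-2} with $y$ playing its own role: since $z=y\in\min(Q)$, removing $z$ from $R$ gives the hybrid sum $(Q-y)\ole_x P$, so
\[
e(R-z)\,=\,e(Q-y)\,e(P)\,+\,(n'-1)\,e(Q-y)\,e(P-x)\,=\,e(Q-y)\bigl(e(P)+(n'-1)\cdot e(P-x)\bigr).
\]
This is the second displayed formula.

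The only real calculation is the expression for $\rho(R,z)$, which I would obtain by dividing the two preceding identities:
\[
\agr(R,z)\,=\,\frac{e(R)}{e(R-z)}\,=\,\frac{e(Q)}{e(Q-y)}\cdot\frac{e(P)+n'\,e(P-x)}{e(P)+(n'-1)\,e(P-x)}\,=\,\agr(Q,y)\cdot\frac{e(P)+n'\,e(P-x)}{e(P)+(n'-1)\,e(P-x)}.
\]
Writing the fraction as $1+\dfrac{e(P-x)}{e(P)+(n'-1)\,e(P-x)}$ and then dividing numerator and denominator of the inner quotient by $e(P-x)$ turns it into $1+\dfrac{1}{\agr(P,x)+n'-1}$, which yields the claimed identity. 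No step presents any real obstacle; the only minor care needed is to keep track of which sum is over linear extensions with $f(x)\ge n'+1$ versus $f(x)\le n'+1$, but that has already been handled when \eqref{eq:hybrid-1}--\eqref{eq:hybrid-3} were established, so the lemma reduces to algebra.
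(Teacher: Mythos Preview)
Your proposal is correct and follows essentially the same approach as the paper: cite \eqref{eq:hybrid-1}, \eqref{eq:hybrid-2}, and \eqref{eq:width} for the first four claims, then divide $e(R)$ by $e(R-z)$ and simplify to obtain the formula for $\rho(R,z)$. The reference to \eqref{eq:hybrid-3} is unnecessary here (it is not used), and the closing remark about tracking cases of $f(x)$ is extraneous, but neither affects correctness.
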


\smallskip

\begin{proof}
%	Let \. $R:= Q \ole_x P$, and let \ts $z:=y$.
	The first four conclusions follows from \eqref{eq:hybrid-1}, \eqref{eq:hybrid-2}, \eqref{eq:width}.
We conclude that
	\begin{align*}
		\ag(R,z) \ &= \ \frac{e(Q)}{e(Q-y)} \, \cdot \, \frac{e(P) \. + \. n' \cdot e(P-x)}{e(P) \. + \. (n'-1) \cdot e(P-x)} \\
		&= \ \ag(Q,y) \. \bigg(1\. + \. \frac{e(P-x)}{e(P) \. + \. (n'-1) \cdot e(P-x)} \bigg) \\
		&= \ \ag(Q,y) \. \bigg( 1 \. + \. \frac{1}{\ag(P,x) \. + \. (n'-1) }\bigg),
	\end{align*}
as desired.
\end{proof}

\smallskip

\begin{lemma}\label{lem:rec-add}
Let \ts $P=(X,\prec)$ \ts be a poset on \ts $|X|=n$ \ts elements,  let \ts
$x \in \min(P)$, and let \ts $b \geq 0$.
Let \ts $R=(Z,\prec^\di)$ \ts be a poset and let \ts $z \in \min(R)$ be given by 
\[R \ := \  C_b \ole_x P, \qquad  z \ := \ x, \]
where $C_b$ is a chain of $b$ elements.
Then we have
	\begin{align*}
		e(R) \ &= \ e(P) \. + \. b\cdot e(P-x),\\
		e(R-z) \ &= \ e(P-x),\\
		|Z| \ &= \ n \. + \. b,\\
	\width(R) \ &\leq \  \max\big\{\ts \width(P), \ts 2\ts \big\}.
	\end{align*}
Additionally, we have:
	\[ \agr(R,z) \ = \  b \. + \, \agr(P,x). \]
%	, and
%	\[ \gcd(e(R),e(R-z)) \ = \  \gcd(e(P), e(P-x)).  \]
\end{lemma}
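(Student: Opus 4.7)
The plan is to obtain every identity in the conclusion as a direct specialization of the hybrid-sum formulas \eqref{eq:hybrid-1}--\eqref{eq:width} to the case \ts $Q := C_b$, \ts $n' := b$. The key simplification is that the chain \ts $C_b$ \ts has exactly one linear extension and width one, which collapses most coefficients in the general formulas. Note that the choice \ts $z := x$ \ts differs from the choice \ts $z := y$ \ts made in Lemma~\ref{lem:rec-gen}, so we cannot simply invoke that lemma; instead, we go back to the underlying identities.

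First, I would verify the cardinality and width. The equality \ts $|Z| = n + b$ \ts is immediate from the definition of hybrid sum. For the width, applying \eqref{eq:width} with \ts $\width(C_b) = 1$ \ts yields
\[
\width(R) \, \le \, \max\big\{\width(P) - 1,\. 1\big\} + 1 \, = \, \max\big\{\width(P),\. 2\big\},
\]
which covers both the case \ts $b \ge 1$ \ts and the degenerate case \ts $b = 0$ \ts (where \ts $R = P$).

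Next, for the linear-extension counts, I would substitute \ts $e(Q) = e(C_b) = 1$ \ts and \ts $n' = b$ \ts into \eqref{eq:hybrid-1}, giving
\[
e(R) \, = \, e(C_b \ole_x P) \, = \, e(P) \, + \, b \cdot e(P - x).
\]
For \ts $e(R - z) = e(R - x)$, I would invoke \eqref{eq:hybrid-3}, which gives
\[
e(R - x) \, = \, e(C_b) \. e(P - x) \, = \, e(P - x).
\]

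Finally, dividing these two identities yields
\[
\ag(R, z) \, = \, \frac{e(R)}{e(R - z)} \, = \, \frac{e(P) + b \cdot e(P - x)}{e(P - x)} \, = \, b \, + \, \ag(P, x),
\]
completing the proof. There is no real obstacle here: the statement is a routine specialization, and the only minor point to check is the \ts $b = 0$ \ts boundary case, where the construction reduces to \ts $R = P$ \ts and each identity holds trivially.
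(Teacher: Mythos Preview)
Your proof is correct and follows exactly the same approach as the paper: both specialize the hybrid-sum identities \eqref{eq:hybrid-1}, \eqref{eq:hybrid-3}, and \eqref{eq:width} to $Q=C_b$ and then divide to obtain the ratio. The paper's version is just terser, omitting the explicit substitutions and the $b=0$ remark.
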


\smallskip

\begin{proof}
%	Let $Q:=C_b$ be a chain of $b$ elements, so \ts $e(Q)=1$.
%Let \. $R:= Q \ole_x P$, and let \. $z:=x$.
The first four conclusions follows from  \eqref{eq:hybrid-1}, \eqref{eq:hybrid-3},
and \eqref{eq:width}. We conclude that
\[ \ag(R,z) \ = \   \frac{e(P) \. + \.  b \cdot e(P-x)}{ e(P-x)} \ = \ \ag(P,x) \. + \. b, \]
as desired.
\end{proof}

\smallskip

By combining the two lemmas above, we get the following:

\smallskip

\begin{lemma}
\label{l:most-general}
Let \ts $P=(X,\prec)$ \ts and \ts $Q=(Y,\prec')$ \ts be posets on \ts $n=|X|$ \ts
and \ts $n'=|Y|$ \ts elements, and let \ts
$x \in \min(P)$, \. $y \in \min(Q)$.  Fix \ts $b \geq 0$.
Let \ts $R=(Z,\prec^\di)$ \ts be a poset and let \ts $z \in \min(R)$ be given by 
\[ R \ := \  C_b \ole_y (Q \ole_x P), \qquad z \ := \ y. \]
where $C_b$ is a chain of $b$ elements.
Then we have
	\begin{align*}
	e(R-z) \ &= \  e(Q-y) \. \big( e(P) \ + \  (n'-1) \.  e(P-x)\big),\\
		e(R) \ &= \   b \cdot e(R-z) \ + \  e(Q) \. \big[\ts e(P) \. + \.  n' \cdot  e(P-x)\ts\big],  \\
	|Z| \ &= \ n \. + \. n' \. + \. b,  \\
	\width(R) \ &\leq \  \max\big\{\ts \width(P),\. \width(Q)+1,2\ts \big\}.
\end{align*}
Additionally, we have:
	\[ \agr(R,z) \ = \ b \ + \ \agr(Q,y) \bigg( 1+ \frac{1}{n'-1 \. +  \. \agr(P,x)  }\bigg). \]
\end{lemma}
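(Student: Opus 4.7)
The plan is to obtain Lemma~\ref{l:most-general} as a direct two-step composition of the preceding lemmas, with no new combinatorial content. The idea is simply that $R$ is built by first forming the intermediate hybrid sum $R' := Q \ole_x P$, and then hybridizing $R'$ with a chain $C_b$ at its minimal element $y$. Since hybrid sums at a minimal element preserve minimality of other elements of the left summand, the distinguished $y \in \min(R')$ remains minimal in~$R$, making both steps fit the hypotheses of the earlier lemmas.

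First, I would set $R' := Q \ole_x P$ and $z' := y$, and invoke Lemma~\ref{lem:rec-gen}. This yields the four identities
\[
e(R') \,=\, e(Q)\bigl(e(P) + n' \cdot e(P-x)\bigr), \qquad
e(R' - z') \,=\, e(Q-y)\bigl(e(P) + (n'-1) \cdot e(P-x)\bigr),
\]
together with $|Z'| = n + n'$ and $\width(R') \le \max\{\width(P),\,\width(Q)+1\}$, and the relative identity
\[
\ag(R',z') \,=\, \ag(Q,y)\left(1 + \frac{1}{n'-1 + \ag(P,x)}\right).
\]

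Next, I would apply Lemma~\ref{lem:rec-add} to the poset $R'$ with the minimal element $z'$ and the chain $C_b$, noting that by construction $R = C_b \ole_{z'} R'$ and $z = z'$. The conclusions of that lemma translate immediately into
\[
e(R - z) \,=\, e(R' - z'), \qquad
e(R) \,=\, e(R') + b \cdot e(R' - z'), \qquad
|Z| \,=\, |Z'| + b,
\]
and $\width(R) \le \max\{\width(R'), 2\}$, together with $\ag(R,z) = b + \ag(R',z')$.

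Finally, I would substitute the formulas from the first step into those of the second. The expression for $e(R-z)$ is immediate; the expression for $e(R)$ comes from collecting $e(R') + b \cdot e(R'-z')$ in the stated order; $|Z| = n + n' + b$ follows from $|Z'| = n+n'$; the width bound is obtained by chaining $\width(R) \le \max\{\width(R'),2\} \le \max\{\width(P),\,\width(Q)+1,\,2\}$; and the relative count follows by adding $b$ to the formula for $\ag(R',z')$. There is no real obstacle here beyond bookkeeping: the only point that needs a line of explanation is why Lemma~\ref{lem:rec-add} is applicable, i.e.\ that $z' = y$ really is minimal in $R'$, which is guaranteed by the definition of hybrid sum (minimal elements of the left summand remain minimal after attaching the right summand below $X - x$).
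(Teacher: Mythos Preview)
Your proposal is correct and follows exactly the paper's approach: the paper's proof is the single line ``This follows from first applying Lemma~\ref{lem:rec-gen} then applying Lemma~\ref{lem:rec-add},'' and you have simply unpacked the bookkeeping of that composition. The only extra care you took, checking that $y$ remains minimal in $R'=Q\ole_x P$ so that Lemma~\ref{lem:rec-add} applies, is indeed the one point worth a sentence and is correctly justified.
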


\smallskip

\begin{proof}
	This follows from first applying Lemma~\ref{lem:rec-gen} then applying Lemma~\ref{lem:rec-add}.
\end{proof}

\smallskip

\begin{lemma}\label{lem:rec-CF}
Let \ts $P=(X,\prec)$ \ts be a poset on \ts $|X|=n$ \ts elements,  let \ts
$x \in \min(P)$, and let \ts $b \geq a \ge 0$.
Let \ts $R=(Z,\prec^\di)$ \ts be a poset and let \ts $z \in \min(R)$ be given by 
\[ R \ := \  C_{b-a} \ole_y  \big( (y \oplus C_{a-1}) \ole_x P \big), \qquad z \ := \ y, \]
where $C_b$ is a chain of $b$ elements, and $y$ is an element not contained in $P$.
Then we have
%Then there exists a poset \ts $R=(Z,\prec^\di)$ \ts and \ts $z \in \min(R)$,
%such that
		\begin{align*}
		e(R-z) \ &= \  e(P) \ + \  (a-1) \cdot  e(P-x),\\
		e(R) \ &= \   (b-a) \cdot e(R-z) \ + \ a \cdot \big[\ts e(P) \. + \.   a\cdot  e(P-x)\ts\big] \ = \ b \cdot e(R-z) \. + \.a \cdot e(P-x),  \\
		|Z| \ &= \ n \. + \. b,  \\
		\width(R) \ &\leq \  \max\big\{\ts \width(P),\ts 3\big\}.
	\end{align*}
Additionally, we have:
	\[ \agr(R,z) \ = \  b \. + \, \frac{a}{a-1 + \agr(P,x)}\,. \]
\end{lemma}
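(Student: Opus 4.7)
The plan is to reduce this lemma directly to Lemma~\ref{l:most-general} by the choice $Q := y \oplus C_{a-1}$ and by substituting $b-a$ for the parameter ``$b$'' appearing there (which is nonnegative since $b \ge a$ by assumption). With this substitution, the construction $R = C_{b-a} \ole_y (Q \ole_x P)$ with $z = y$ is exactly the construction handled by Lemma~\ref{l:most-general}; in particular, $y \in \min(Q)$ survives as a minimal element of $Q \ole_x P$, so the hybrid sum with the chain $C_{b-a}$ at $y$ is well defined.

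First I would record the basic data of the auxiliary poset $Q$. It has $n' := a$ elements, $e(Q-y) = e(C_{a-1}) = 1$, and $e(Q) = \binom{a}{1} = a$, since each linear extension is determined by the position at which $y$ is interleaved into the chain $C_{a-1}$. Therefore $\agr(Q,y) = a$, and $\width(Q) = 2$ because $y$ is incomparable to every element of $C_{a-1}$ while the chain itself has width $1$.

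Next I would plug these values into the four identities of Lemma~\ref{l:most-general}. This yields, in order: $e(R-z) = e(Q-y)\bigl(e(P) + (n'-1)e(P-x)\bigr) = e(P) + (a-1)e(P-x)$; then $e(R) = (b-a)\,e(R-z) + e(Q)\bigl(e(P) + n'\,e(P-x)\bigr) = (b-a)\,e(R-z) + a\bigl(e(P) + a\,e(P-x)\bigr)$; next $|Z| = n + n' + (b-a) = n + b$; and finally $\width(R) \le \max\{\width(P), \width(Q)+1, 2\} = \max\{\width(P), 3\}$. The recurrence for the relative count collapses as $(b-a) + a\bigl(1 + \tfrac{1}{a-1+\agr(P,x)}\bigr) = b + \tfrac{a}{a-1+\agr(P,x)}$, matching the claimed formula for $\agr(R,z)$.

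The only remaining bookkeeping is the alternative presentation $e(R) = b\cdot e(R-z) + a\cdot e(P-x)$. I would verify this in one line by substituting $e(R-z) = e(P) + (a-1)e(P-x)$ into both expressions for $e(R)$ and checking that each equals $b\cdot e(P) + (ab - b + a)\,e(P-x)$. I do not anticipate a real obstacle: the argument is a direct instantiation of Lemma~\ref{l:most-general} with the auxiliary poset $Q = y \oplus C_{a-1}$, which is engineered precisely so that $\agr(Q,y) = a$ contributes the numerator in the continued-fraction expression for $\agr(R,z)$.
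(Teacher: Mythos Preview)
Your proposal is correct and follows exactly the paper's approach: set $Q := y \oplus C_{a-1}$, record its basic invariants, and invoke Lemma~\ref{l:most-general} with the substitution $b \gets b-a$. The paper's proof is in fact terser than yours, stating only those invariants and the substitution without spelling out the arithmetic verification you include.
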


\smallskip

\begin{proof}
	Let \. $Q=(Y,\prec') :=y \oplus C_{a-1}$\..
	Note that
	\[ e(Q) \. = \.  a, \quad e(Q-y) \. = \. 1,  \quad |Y| \. = \. a, \quad \text{and} \quad \width(Q)\. = \.2. \]
	The lemma now follow from  substituting  \. $b \gets (b-a)$ \. into Lemma~\ref{l:most-general}.
\end{proof}

\smallskip

\subsection{A flip-flop construction}  \label{ss:rec-another}
We will need the following variation on the hybrid sum construction
to prove Theorem~\ref{t:CP-relative}.
%\com{SH}{Out of curiosity, why is it called flip-flop?}

Let \ts $P=(X,\prec)$ \ts and \ts $Q=(Y,\prec')$ \ts be posets on \ts $n=|X|$ \ts
and \ts $n'=|Y|$ \ts elements, and let \ts $x \in \min(P)$, \. $y \in \min(Q)$.
The \defnb{flip-flop poset} 
 \. $R=(Z, \prec^\di)$ \. is defined by 
\[  Z \  := \   (X-x) \. \cup \.   (Y-y)  \. \cup \{z,v\},  \]
where \ts $z,v$ are new elements.  The partial order \ts $\prec^\di$ \ts
is  defined by
\begin{align*}
	p \. \prec^\di \. p' \quad & \text{ for every } \quad  p,\ts p' \in  X-x \quad \text{s.t.} \quad p\. \succ \. p', \\
	q \. \prec^\di \. q' \quad & \text{ for every } \quad  q,\ts q' \in Y-y \quad \text{s.t.} \quad q\. \prec' \. q', \\
	p  \. \prec^\di \. z \quad & \text{ for every } \quad \. p \in X-x \quad \text{s.t.} \quad x \. \prec \. p\ts,\\
	z  \. \prec^\di \. q \quad & \text{ for every } \quad \. q \in Y-y \quad \text{s.t.} \quad 	y \. \prec' \. q\ts,\\
	p \. \prec^\di \. v \. \prec^\di \. q  \quad & \text{ for every } \quad p \in X-x, \quad q \in Y -y\ts, \\
	& \quad \text{ and }  \quad z \, ||_{\prec^\di} \, v\ts.
\end{align*}

\begin{lemma}\label{l:rec-relative}
The flip-flop poset \. $R=(Z, \prec^\di)$ \. satisfies
%Then there exists a poset \ts $R=(Z,\prec^\di)$ \ts and \ts $z \in Z$,
%such that
	\begin{align*}
		e(R) \ &= \  e(P) \. e(Q-y) \. + \. e(P-x) \. e(Q), \\
		e(R-z) \ &= \ e(P-x) \. e(Q-y),\\
		|Z| \ &= \ n \. + \. n',\\
		\width(R) \ &\leq  \ \width(P) \. + \. \width(Q).
	\end{align*}
Additionally, we have:
	\[ \agr(R,z) \, = \, \agr(P,x) \. + \. \agr(Q,y).
    \]
%\frac{e(Q)}{e(Q-y)} \ = \    \frac{e(P)}{e(P-x)} \.  + \.  \frac{e(P')}{e(P'-x')}.
\end{lemma}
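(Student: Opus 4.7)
The plan is to deduce all four equalities from a careful case analysis of where the two new elements $v$ and $z$ sit in a linear extension of $R$.

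First I would dispose of the easy items. The size count $|Z| = (n-1) + (n'-1) + 2 = n + n'$ is immediate. For the width bound, the key observation is that every $p \in X-x$ satisfies $p \prec^\di v$ and every $q \in Y-y$ satisfies $v \prec^\di q$, so transitivity forces every pair in $(X-x) \times (Y-y)$ to be comparable in $R$. Any antichain of $R$ is thus contained in $(X-x) \cup \{z,v\}$ or in $(Y-y) \cup \{z,v\}$; a short argument (removing $v$ if present, and using that the elements of $X-x$ incomparable with $z$ are precisely those incomparable with $x$ in $P$, since $x \in \min(P)$) reduces each case to an antichain in $P$ or in $Q$. This yields $\width(R) \le \max\{\width(P),\width(Q),2\} \le \width(P) + \width(Q)$.

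The main step is the enumeration. Observe that $v$ has exactly $n-1$ elements strictly below it (namely $X-x$) and exactly $n'-1$ strictly above it (namely $Y-y$), while $z \parallel v$. Hence in any linear extension $f \in \Ec(R)$, the element $v$ must occupy position $n$ or $n+1$, and this is governed entirely by whether $z$ precedes or follows $v$. This splits $\Ec(R) = \Ec_A(R) \sqcup \Ec_B(R)$ according to which side of $v$ the element $z$ falls on, and I would compute each class separately.

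In $\Ec_A(R)$, where $z$ precedes $v$, the first $n$ positions are filled by $(X-x) \cup \{z\}$, whose $\prec^\di$-order is anti-isomorphic to $P$ via the identification $z \leftrightarrow x$ (checking this reduces to the defining relations of $\prec^\di$); reversing any such linear extension gives a bijection with $\Ec(P)$, contributing $e(P)$, while the final $n'-1$ positions host $Y-y$ and contribute $e(Q-y)$. Symmetrically, $\Ec_B(R)$, where $z$ follows $v$, contributes $e(P-x) \cdot e(Q)$, since $(Y-y)\cup\{z\}$ is order-isomorphic to $Q$ via $z \leftrightarrow y$ while $X-x$ carries the reverse of $\prec|_{X-x}$. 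Summing gives $e(R) = e(P)\,e(Q-y) + e(P-x)\,e(Q)$, and running the same argument with $z$ deleted forces $v$ to position $n$ and produces $e(R-z) = e(P-x)\,e(Q-y)$. The claim $\rho(R,z) = \rho(P,x) + \rho(Q,y)$ is then a one-line division. The main obstacle I foresee is verifying the two subposet isomorphisms---particularly the anti-isomorphism between $(X-x) \cup \{z\}$ under $\prec^\di$ and $P$ under $\prec$---since one must carefully match the relations $p \prec^\di z$ (for exactly those $p$ with $x \prec p$) against those of $x$ in $P$ after reversal. Once this correspondence is in place, everything else is forced by the observation that $v$ rigidly separates $X-x$ from $Y-y$ in every linear extension.
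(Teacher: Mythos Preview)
Your proof is correct and follows essentially the same approach as the paper: both split $\Ec(R)$ according to whether $f(z) < f(v)$ or $f(z) > f(v)$, and both observe that $R - z$ is (up to dualizing the $X-x$ part) a linear sum $(P-x)\ole\{v\}\ole(Q-y)$. Your width argument is more careful than the paper's and in fact yields the stronger bound $\width(R) \le \max\{\width(P),\width(Q),2\}$, whereas the paper simply asserts $\width(R) \le \width(P)+\width(Q)$ ``by construction.''
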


We warn the reader that the element \ts $z$ \ts is not necessarily a minimal element
of~$R$, so this construction cannot be easily iterated.

\smallskip

\begin{proof}
We have:
\[	
e(R) \ = \  e(P) \cdot e(Q-y) \. + \. e(P-x) \cdot e(Q).
\]
Indeed, the factor \. $e(P) \cdot e(Q-y)$ \. counts linear extensions \ts
$f\in \Ec(R)$ \ts for which \ts $f(u)<f(v)$, while the factor \. $e(P-x) \. e(Q)$ \.
counts linear extensions \ts $f\in \Ec(R)$ \ts for which \ts $f(u)>f(v)$.
	Also note that
	\[ e(R-z) \ = \  e(P-x) \cdot e(Q-y),  \]
	because \. $(R-z)$ \. is isomorphic to the linear sum  \. $(P-x) \ole \{v\} \ole (Q-y)$.
Finally, note that	\begin{align*}
		\width(R) \ &\leq  \ \width(P) \. + \. \width(Q),\\
		|Z| \ &= \ |X| \. + \. |Y|,
	\end{align*}
	by construction.  This completes the proof.
\end{proof}

%%%%%%%%%%%%%%%%%%%%%%%%%%%%%%%%%%%%%%%%%%%%%%%%%%%%%%%%%%%%%%%%%%%%%%%%%%%%%%%%%%%%%%%%%%%%%%%%
%%%%%%%%%%%%%%%%%%%%%%%%%%%%%%%%%%%%%%%%%%%%%%%%%%%%%%%%%%%%%%%%%%%%%%%%%%%%%%%%%%%%%%%%%%%%%%%%
%%%%%%%%%%%%%%%%%%%%%%%%%%%%%%%%%%%%%%%%%%%%%%%%%%%%%%%%%%%%%%%%%%%%%%%%%%%%%%%%%%%%%%%%%%%%%%%%
%%%%%%%%%%%%%%%%%%%%%%%%%%%%%%%%%%%%%%%%%%%%%%%%%%%%%%%%%%%%%%%%%%%%%%%%%%%%%%%%%%%%%%%%%%%%%%%%
%%%%%%%%%%%%%%%%%%%%%%%%%%%%%%%%%%%%%%%%%%%%%%%%%%%%%%%%%%%%%%%%%%%%%%%%%%%%%%%%%%%%%%%%%%%%%%%%
%%%%%%%%%%%%%%%%%%%%%%%%%%%%%%%%%%%%%%%%%%%%%%%%%%%%%%%%%%%%%%%%%%%%%%%%%%%%%%%%%%%%%%%%%%%%%%%%
%%%%%%%%%%%%%%%%%%%%%%%%%%%%%%%%%%%%%%%%%%%%%%%%%%%%%%%%%%%%%%%%%%%%%%%%%%%%%%%%%%%%%%%%%%%%%%%%
%%%%%%%%%%%%%%%%%%%%%%%%%%%%%%%%%%%%%%%%%%%%%%%%%%%%%%%%%%%%%%%%%%%%%%%%%%%%%%%%%%%%%%%%%%%%%%%%

\medskip

\section{Proofs} \label{s:proofs}

\subsection{Proof of Theorem~\ref{t:GCF}}
We prove the claim by induction on $m$.  First, let $m=0$.  Recall the notation in~$\S$\ref{ss:def-CF}.
Note that condition~\eqref{eq:GCF-balanced} becomes \. $b_0 \geq 1$, which holds by the assumption.
Let \. $P=(X,\prec):=\{x\} \oplus C_{b_0-1}$.  Then we have:
\[ e(P) \, = \,  b_0 \, = \,  C_0(b_0) \quad \text{and}  \quad e(P-x)  \, =  \, 1 \, = \, D_0(b_0).
\]
We also have \. $|X|=b_0$ \. and \. $\width(P)=2$, as desired.
	
Suppose now that the claim holds for \ts $m-1$.  Let \. $b_1':=b_1-a_1+1$.
The balanced assumptions \eqref{eq:GCF-balanced} gives \. $b_1'\geq a_2$.
Thus, by the inductive assumption,
there exist \ts $P'=(X',\prec')$ \ts and $x' \in \min(P)'$,  	
such that
\begin{align*}
	e(P'-x') \ &= \ D_0(a_2,\ldots, a_m \ts ; \ts b_1', b_2, \ldots, b_m) 	\ = \ D_1(a_1,\ldots, a_m \ts ; \ts b_0, b_1, \ldots, b_m), \\
e(P') \ &= \   C_0(a_2,\ldots, a_m \ts ; \ts  b_1', b_2, \ldots, b_m) \\
\ &= \ C_0(a_2,\ldots, a_m \ts ; \ts  b_1, b_2, \ldots, b_m) \. - \. (a_1-1) \cdot D_0(a_2,\ldots, a_m \ts ; \ts  b_1, b_2, \ldots, b_m) \\
\ &= \ C_1(a_1,\ldots, a_m \ts ; \ts  b_0, \ldots, b_m) \. - \. (a_1-1) \cdot D_1(a_1,\ldots, a_m \ts ; \ts  b_0, \ldots, b_m).
\end{align*}
%(Note  that \. $b_1'$ \. here satisfies \. $b_1'\geq a_2$ \. by condition \eqref{eq:GCF-balanced}, so this induction step is allowed).
Now, apply  Lemma~\ref{lem:rec-CF} to \ts $P \gets P'$ \ts with \. $b \gets b_0$ \. and \. $a \gets a_1\ts$ \. and \. $x \gets x'$.
We obtain a poset \ts $R=(Z,\prec^\di)$ \ts on \ts $|Z|=n$ \ts elements, and \ts $z \in \min(R)$, such that
\begin{align*}
	e(R-z)  \ &= \ e(P') \. + \. (a_1-1) \cdot  e(P'-x') \ = \  C_1(a_1,\ldots, a_m \ts ; \ts   b_0, \ldots, b_m)\\
	&= \  D_0(a_1,\ldots, a_m \ts ; \ts   b_0, \ldots, b_m),\\
	e(R) \ &= \ b_0 \cdot e(R-z) \. + \. a_1 \cdot e(P'-x') \\
& = \ b_0 \cdot  D_0(a_1,\ldots, a_m \ts ; \ts   b_0, \ldots, b_m) \. + \. a_1 \cdot D_1(a_1,\ldots, a_m \ts ; \ts   b_0, \ldots, b_m) \\
	&= \  C_0(a_1,\ldots, a_m  \ts ; \ts   b_0, \ldots, b_m).
\end{align*}
Since induction assumption implies that $|X|'= \aG(a_2,\ldots,a_m \ts ; \ts b_1',b_2,\ldots,b_m)$, 
we also have
\begin{align*}
		n \ &= \  b_0 \. + \. |X'| \, = \,  b_0 \. + \. b_1' \. + \. \sum_{i=2}^m \ts b_i \. - \.
\sum_{i=2}^m \ts a_i \. + \, m -1 \\
		\ &  = \ \sum_{i=0}^m \ts b_i \. - \. \sum_{i=1}^m \ts a_i \. + \, m  \ = \  \aG(a_1,\ldots,a_m \ts ; \ts b_0,\ldots,b_m),
 \end{align*}
and \. $\width(R) \. \leq \. \max\big\{\ts \width(P'),\. 3\ts\big\} \. \leq  \. 3$.  Finally, we have:
\begin{align*}
	\ag(R,z) \ = \  \cfrac{ C_0(a_1,\ldots, a_m  \ts ; \ts   b_0, \ldots, b_m)}{ D_0(a_1,\ldots, a_m  \ts ; \ts   b_0, \ldots, b_m)} \ = \ [a_1,\ldots, a_m \ts ; \ts b_0,\ldots, b_m].
\end{align*}
The theorem is now complete by substituting \ts $P \gets R$ \ts and \ts $x \gets z$. \qed

\smallskip

\subsection{Proof of Theorem~\ref{t:RGCF} }
We prove the claim by induction on~$m$.
% The case $k=0$ is exactly Theorem~\ref{t:GCF}.
% So we assume that the claim is true for $k-1$.
For $m=0$, let \. $P = (X,\prec) :=x \oplus C_{b_0-1}\ts$.
We have:
\[ e(P) \, = \, b_0 \, = \  C_0(b_0) \quad \text{and} \quad e(P-x)  \, =  \, 1 \, = \, D_0(b_0). \]
We also have \. $|X|=b_0$ \. and \. $\width(P)=2$, which proves the case \ts $m=0$.

We now suppose the claim is already proved for \ts $(m-1)$. By the induction assumption,
there exists a poset \ts $P'=(X',\prec')$ \ts and element \ts $x' \in \min(P')$, such that
\begin{align*}
	e(P'-x') \ &= \ D_0(\al_2,\ldots, \al_m \ts ; \ts  b_1, b_2, \ldots, b_m)
	\ = \ D_1(\al_1,\ldots, \al_m \ts ; \ts b_0, b_1, \ldots, b_m), \\
	e(P') \ &= \   C_0(\al_2,\ldots, \al_m \ts ; \ts  b_1, b_2, \ldots, b_m)
	\ = \ C_1(\al_1,\ldots, \al_m \ts ; \ts  b_0, \ldots, b_m).
\end{align*}
Applying Theorem~\ref{t:KS-CF} to \ts $\al_1$,
there exists a poset \ts $Q=(Y,\prec')$ \ts and \ts $y \in \min(Q)$ \ts such that
\begin{align*}
	&e(Q) \, = \, c_1, \quad e(Q-y) \, = \, d_1\ts, \quad |Y| \, = \,  \as(\alpha_1) \quad \text{and} \quad \width(Q) \, \leq \, 2.
\end{align*}

Now, apply  Lemma~\ref{l:most-general} to posets \ts $P'$, \ts $Q$,  and element \. $b_0$.
We obtain a poset $P=(X,\prec)$ \ts and \ts $x \in \min(P)$, such that
\begin{align*}
	e(P-x) \ &= \  e(Q-y) \. \big[ e(P') \ + \  (|Y|-1) \.  e(P'-x)\big],\\
	&= \  d_1\cdot\big[C_1(\al_1,\ldots, \al_m \ts ; \ts b_0, \ldots, b_m) \. + \. \big( \as(\alpha_1)-1\big) \cdot D_1(\al_1,\ldots, \al_m \ts ; \ts  b_0, b_1, \ldots, b_m) \big] \\
	&= \ D_0(\al_1,\ldots, \al_m \ts ; \ts b_0, \ldots, b_m).
\end{align*}
We also have
\begin{align*}
	 e(P) \ &= \ b_0 \cdot e(P-x) \ + \  e(Q) \. \big[ e(P') \ + \  |Y| \cdot  e(P'-x')\big] \\
	&= \  b_0 \cdot  D_0(\al_1,\ldots, \al_m\ts ; \ts  b_0, \ldots, b_m) \\
	& \qquad \.  + \. c_1 \cdot \big[ C_1(\al_1,\ldots, \al_m \ts ; \ts b_0, \ldots, b_m) \ + \
\as(\alpha_1)  \cdot  D_1(\al_1,\ldots, \al_m\ts ; \ts  b_0, b_1, \ldots, b_m)\big] \\
	&= \ C_0(\al_1,\ldots, \al_m \ts ; \ts b_0, \ldots, b_m)
\end{align*}
and
\[ \width(P) \, \leq \,  \max\big\{\ts \width(P'),\ts \width(Q)+1,\ts 2\ts \big\} \, \leq \, 3. \]
Finally, we have
\begin{align*}
 |X| \ &= \  |X'|\. + \. |Y| \. + \. b_0 \\
  &= \ (b_1+\ldots+ b_m)  \. + \. \as(\al_2)+ \ldots + \as(\al_m))  \. + \. \as(\al_1) \.  + \. b_0 \\
  &= \  \aR(\al_1,\ldots,\al_m \ts ; \ts b_0,\ldots,b_m).
\end{align*}
This completes the proof. \qed

\smallskip

\subsection{Proof of Propositions~\ref{p:GCF-imply} and~\ref{p:RGCF-imply}}
For Proposition~\ref{p:GCF-imply}, recall from the introduction
that the Conjecture~\ref{conj:gen-asy}
implies Conjecture~\ref{conj:KS-main} for prime~$d$. Indeed,
by Theorem~\ref{t:KS-CF}  for a GCF \.
$[a_1,\ldots, a_m \ts ; \ts b_0,\ldots, b_m]=\frac{d}{c}\ts$,
we obtain a poset \ts $P=(X,\prec)$ \ts and \ts $x \in X$ \ts  such that \ts
$|X|=\agg\big(\frac{d}{c}\big) \le C \ts \log d$ \ts and \ts
\ts $\frac{e(P)}{e(P-x)}=\frac{d}{c}$\ts.
By the reduced condition on the definition of $\agg$, it then follows that
 $e(P)=d$,
as desired. %\com{SH}{Should we add here by the reduced condition in $\agg$?}

To show that the first part of Conjecture~\ref{conj:gen-asy} suffices,
let \. $p_1^{m_1}\ldots p_\ell^{m_\ell}$ \.
be the prime factorization of~$d$.  For each prime \ts $p_i$,
let \ts $P_i=(X_i, \prec_i)$ \ts be the corresponding poset with \ts $e(P_i)=p_i$ \ts
and \ts $|X_i| \le C \ts \log p_i$.  Define
\[  P  \ := \  \underbrace{P_1 \ole \cdots \ole P_1}_{m_1\text{ times}} \,  \ole \,
\cdots \, \ole\,   \underbrace{P_\ell \ole \cdots \ole P_\ell}_{m_\ell\text{ times}}
\]
be the linear sum of posets~$P_i$.  We have:
\[e(P) \,  = \,  \prod_{i=1}^\ell \. e(P_i)^{m_i} \, = \, d\ts,
\]
and
\begin{align*}
	|X|  \, = \, \sum_{i=1}^\ell \. m_i \.\ts |X_i| \ \leq \ C \. \sum_{i=1}^\ell \. m_i \. \log p_i \ = \, C \ts \log d\ts.
\end{align*}
This completes the proof of Proposition~\ref{p:GCF-imply}.
The proof of Proposition~\ref{p:RGCF-imply} follows verbatim.\qed

\smallskip

\subsection{Proof of Theorem~\ref{t:CP-relative} }
First, observe that there exists a constant \ts $C>0$, such that
for all coprime integers \ts $a,\ts b\le d$ \ts which satisfy \. $C < b \leq a \leq 2b$,
there exists a positive integer \. $\ell:=\ell(a,b)$ \. such that \ts $1\le \ell <  b$, and
\begin{equation}\label{eq:ell-ab}
\as\big(\tfrac{\ell}{b}\big) \, \leq \,  2 \. \log b \. \log \log b  \quad \text{ and } \quad
\as\big(\tfrac{a-\ell}{b}\big) \, \leq \,  2 \. \log b \. \log \log b\ts.
\end{equation}
Indeed, by Theorem~\ref{t:Ruka} and using \. $\frac{12}{\pi^2}<2$, for random \ts $\ell \in \{1,\ldots,b\}$,
the probability that each inequality fails \. $\to 0$ \.
as \. $b\to \infty$.  %Thus the number of such \ts $\ell$ \ts is $n\big(1-o(1)\big)$.
Taking \ts $C$ \ts large enough so that each probability is \ts $<\frac12$ \ts proves the claim.

Let \ts $a,b$ \ts be given by
\[ a \, := \,  c \. + \. d \.  - \, \left\lfloor \frac{d}{c} \right \rfloor \. c, \quad b  \. := \. c\ts,
\]
so that \. $b \leq a \leq 2\ts b$ \. and \. $b \leq d$\..
From above, there exists \. $1\le \ell \le b$, such that \eqref{eq:ell-ab} holds.
Let
\[ \alpha \, := \,  1 \. + \. \frac{\ell}{b} \quad \text{and}
\quad \be \, := \,  \left\lfloor \frac{d}{c} \right \rfloor \.  - \. 2  \. + \.  \frac{a-\ell}{b}\..
\]
It follows from the construction that \. $\alpha+ \be =\frac{d}{c}$\..
Since \. $\frac{d}{c}\geq 3$, we have \. $\alpha,\be \geq 1$.

Applying Theorem~\ref{t:GCF} to simple continued fractions, we obtain a poset \ts $P=(X,\prec)$ \ts
and element \ts $x \in \min(P)$, such that
\[
    \ag(P,x) \, = \,  \alpha \quad \text{and} \quad |X| \, = \, 1 \. + \. \as\big(\tfrac{\ell}{b}\big).
\]
Similarly, we obtain a poset \ts $Q=(Y,\prec')$ \ts
and element \ts $y \in \min(Q)$, such that
\[
    \ag(Q,y) \, = \,  \be \quad \text{and} \quad |Y| \, = \, \left\lfloor \tfrac{d}{c} \right \rfloor \.  - \. 2  \. + \.   \as\big(\tfrac{a-\ell}{b}\big).
\]
By Lemma~\ref{l:rec-relative}, there exists a poset \. $R=(Z,\prec^\di)$ \ts and
element \. $z \in Z$, such that	
\[ \ag(R,z) \, = \,   \ag(P,x) \.  + \.  \ag(Q,y) \, = \,  \frac{d}{c}\,,
\]
	and
\[  |Z| \ = \  |X| \. + \. |Y| \ =  \ \left\lfloor \frac{d}{c} \right \rfloor \. - \. 1 \. + \.  \as\big(\tfrac{\ell}{b}\big) \.  + \.  \as\big(\tfrac{a-\ell}{b}\big) \, \leq \, \frac{d}{c} \.  + \. O(\log d \. \log \log d).
\]
This completes the proof. \qed

%\vfil

\medskip

\section{Final remarks and open problems} \label{s:finrem}

\subsection{} \label{ss:finrem-nature}
The nature of connections between counting combinatorial objects
and continued fractions described in~$\S$\ref{ss:intro-for}
is clear and easy to explain: when objects are decomposed into smaller
objects, they often have simple recurrences of the type described
in~$\S$\ref{ss:def-CF}.  Fundamentally, this is the same reason
why the generating functions are so powerful in combinatorial enumeration,
see e.g.\ \cite{GJ83,Sta-EC}.
And yet, every time such a connection is found it is an unexpected delight,
stemming both from the sheer elegance of continued fractions as well as
the power of technical tools developed for them.  While we tend to be
swayed by the latter arguments, we appreciate the former sentiments.

\subsection{} \label{ss:finrem-hist}
The upper bound in Larcher's Theorem~\ref{t:Larcher}
was sharpened by Rukavishnikova \cite{Ruka} to \ts $O(\log d \ts \log \log d)$.
Since \ts $\frac{d}{\phi(d)}$ \ts can be as large as \ts $C\log \log d$,
see e.g.\ \cite[Thm~328]{HardyWright}, this is a significant asymptotic improvement.
This result was further sharpened by Aistleitner, Borda and Hauke \cite[Cor.~2]{ABH23},
who proved that for all \ts $d\ge 3$ \ts there exist \ts $1\le c < d$, such that
\begin{equation}\label{eq:ABH-cor}
\as\big(\tfrac{c}{d}\big) \, \le  \, \tfrac{12}{\pi^2} \. \log d \. \log \log d \. + \. O\big(\log d)\ts.
\end{equation}
Note that we are using only prime \ts $d$ \ts for our applications, which it why
we postponed this recent result.  We note in passing that the authors of \cite{KS21}
stated Conjecture~\ref{conj:KS-asy} in the generality of all~$d$; while plausible
this remains out of reach with the existing technology.  They were unaware of the
earlier work and rediscovered Theorem~\ref{t:Larcher}.\footnote{Personal communication.}

\subsection{} \label{ss:finrem-hist}
The asymptotics in the upper bound \eqref{eq:ABH-cor} cannot be easily improved by
probabilistic arguments.  This follows from a version on the tail estimates \eqref{eq:Ruka}
given in \cite{Ruk06}.  A stronger result was proved in~\cite[Thm~1]{ABH23},
which implies that for all \ts $C<0$, \ts $d\ge 3$, and \ts $\ve=\ve(d) > (\log d)^C$,
for the \ts $(1-\ve)$ \ts fraction of \ts $c\in \{1,\ldots,d\}$ \ts with \ts $\gcd(c,d)=1$, we have:
\begin{equation}\label{eq:ABH-tail}
\left|\.\as\big(\tfrac{c}{d}\big) \, - \, \tfrac{12}{\pi^2} \. \log d \. \log \log d\.\right|
\ = \  O\big(\tfrac{\log d}{\ve}\big).
\end{equation}
Of course, this does not preclude the outlying small values predicted by
Zaremba's conjecture.  In fact, as was pointed out in~\cite{ABH23},
the distribution of \. $\as\big(\frac{c}{d}\big)$ \ts is heavy-tailed
and has a large mean:
\begin{equation}\label{eq:YK}
    \tfrac{1}{\phi(d)} \. \sum_c \.  \asr\big(\tfrac{c}{d}\big) \ = \
    \tfrac{6}{\pi^2} \. (\log d)^2 \. + \. O\big((\log d) (\log \log d)^2\big), %\quad \text{as \ \  $n\to \infty$},
\end{equation}
where the summation is over all \. $c\in \{1,\ldots,d\}$ \. such that \. $\gcd(c,d)=1$.
This was proved independently in \cite{Lie83,Pan82,YK75}.

\subsection{} \label{ss:finrem-probab}
It was pointed out by Kravitz and Sah (see Remark~5.31 in~\cite{CP23a}),
that the numerator \ts $c$ \ts in Theorem~\ref{t:KS-CF} can be found in probabilistic
polynomial time \. poly$(\log d)$.  Tail estimates \eqref{eq:ABH-tail}
give a simpler (and faster) probabilistic algorithm: pick a random~$c$,
check if \ts $\gcd(c,d)=1$, compute a simple~CF \eqref{eq:CF-def},
repeat if \. $\as\big(\tfrac{c}{d}\big)> 2 \log d \ts \log \log d$.
% In fact, weaker tail bounds \eqref{eq:Ruka} suffice
% for this argument.
%
It is an interesting open problem if this
can be done deterministically.  More broadly, is there a deterministic polynomial
time construction of a poset with exactly $n$ linear extensions?  So far, the only
deterministic construction we know of is  by Tenner~\cite{Ten}, which is exponential
in \ts $(\log n)$.

\subsection{} \label{ss:finrem-Zar}
Zaremba's Conjecture~\ref{conj:Zaremba} is often stated with \ts $A=5$ \ts or even
\ts $A=4$ \ts for all sufficiently large integers.  It is known to hold for integers of the
form \ts $2^m\ts 3^n$, for other families of powers of small primes and sufficiently
large powers of all primes, where the constant~$A$ can depend on the prime, see \cite{Shu23}.
We refer to \cite[$\S$6.2]{BPSZ14} for an elegant presentation of the \ts $2^m$ \ts case.
Of course, the Kravitz--Sah Conjecture~\ref{conj:KS-main} is trivial in this case.
Note that the constant $50$ in the Bourgain--Kontorovich theorem that was used in the
proof of Corollary~\ref{c:BK}, has been improved to~$5$ in~\cite{Hua15}.
See \cite{Kan21} for further extensions, and \cite[$\S7$]{Shk21}
for an overview.

\subsection{} \label{ss:finrem-count}
It would be interesting to find an elementary proof of the first part
of Corollary~\ref{c:BK}.  The result is especially surprising given
that the bound is obtained on a relatively small family of posets
of width two.  On the other hand, we know of no nontrivial bound
for the much larger family of height two posets (cf.~\cite{Sou23}).

\subsection{} \label{ss:finrem-height-two}
In \cite[Conj.~5.17]{CP23a}, we conjecture that all but finitely many
integers are the numbers of linear extensions of posets of height two.
We also observe (Prop.~5.18, ibid.), that this would imply
Conjecture~\ref{conj:KS-main} with a sharp \. $\Theta\big(\frac{\log n}{\log\log n}\big)$
\. asymptotics.

%\subsection{} \label{ss:finrem-refs}
%

\vskip.6cm
{\small

\subsection*{Acknowledgements}
We are grateful to Christoph Aistleitner,
Milan Haiman, Oleg Karpenkov,  Alex Kontorovich,
Noah Kravitz, Greta Panova, Ashwin Sah, Rolf Schiffler,
Ilya Shkredov, Nikita Shulga and Alan Sokal for interesting 
discussions and helpful remarks.
Special thanks to Maria Rukavishnikova for sending us~\cite{Ruk06}.
Last but not least, we would like to thank the anonymous referees for valuable comments and suggestions.
Both authors were partially supported by the~NSF.
}

%\newpage
 \vskip1.1cm

%%%%%%%%%%%%%%%%%%%%%%%%%%%%%%%%%%%%%%%%%%%%%%%%%%%%%%%%%%%%%%%%%%%%%%%%

{\footnotesize

}	

%%%%%%%%%%%%%%%%%%%%%%%%%%%%%%%%%%%%%%%%%%%%%%%%%%%%%%%%%%%%%%%%%%%%%%%%%%%%%%%%%%%%%%%%%%%%%
%%%%%%%%%%%%%%%%%%%%%%%%%%%%%%%%%%%%%%%%%%%%%%%%%%%%%%%%%%%%%%%%%%%%%%%%%%%%%%%%%%%%%%%%%%%%%
%%%%%%%%%%%%%%%%%%%%%%%%%%%%%%%%%%%%%%%%%%%%%%%%%%%%%%%%%%%%%%%%%%%%%%%%%%%%%%%%%%%%%%%%%%%%%
%%%%%%%%%%%%%%%%%%%%%%%%%%%%%%%%%%%%%%%%%%%%%%%%%%%%%%%%%%%%%%%%%%%%%%%%%%%%%%%%%%%%%%%%%%%%%
%%%%%%%%%%%%%%%%%%%%%%%%%%%%%%%%%%%%%%%%%%%%%%%%%%%%%%%%%%%%%%%%%%%%%%%%%%%%%%%%%%%%%%%%%%%%%
%%%%%%%%%%%%%%%%%%%%%%%%%%%%%%%%%%%%%%%%%%%%%%%%%%%%%%%%%%%%%%%%%%%%%%%%%%%%%%%%%%%%%%%%%%%%%
%%%%%%%%%%%%%%%%%%%%%%%%%%%%%%%%%%%%%%%%%%%%%%%%%%%%%%%%%%%%%%%%%%%%%%%%%%%%%%%%%%%%%%%%%%%%%
%%%%%%%%%%%%%%%%%%%%%%%%%%%%%%%%%%%%%%%%%%%%%%%%%%%%%%%%%%%%%%%%%%%%%%%%%%%%%%%%%%%%%%%%%%%%%


\begin{thebibliography}{abcdefgh}

\bibitem[ABH22]{ABH23}
Christoph Aistleitner, Bence Borda and Manuel Hauke,
On the distribution of partial quotients of reduced fractions with fixed denominator,
\emph{Trans. Amer. Math. Soc.}~\textbf{377} (2024), 1371--1408.



\bibitem[AB05]{AB05}
George~E.~Andrews and Bruce~C.~Berndt,
\emph{Ramanujan's lost notebook. Part~I}, Springer, New York, 2005, 437~pp.

\bibitem[BPSZ14]{BPSZ14}
Jonathan~Borwein,  Alf~van~der~Poorten, Jeffrey~Shallit and
Wadim~Zudilin,  \emph{Neverending fractions},
Cambridge Univ.\ Press, Cambridge, UK, 2014, 212~pp.

\bibitem[BK14]{BK14}
Jean~Bourgain and Alex~Kontorovich, On Zaremba's conjecture,
\emph{Ann.\ of Math.}~\textbf{180} (2014), 137--196.

\bibitem[Bre91]{Bre91}
Claude~Brezinski, \emph{History of continued fractions and Pad\'e approximants},
Springer, Berlin, 1991, 551~pp.

\bibitem[BW00]{BrW}
Graham~Brightwell and Douglas~B.~West, Partially ordered sets, Ch.~11 in \emph{Handbook of
discrete and combinatorial mathematics}, CRC Press,  Boca Raton, FL, 2000, 717--752.

\bibitem[\c{C}S18]{CS18}
\.{I}lke \c{C}anak\c{c}\i and and Ralf~Schiffler,
Cluster algebras and continued fractions,
\emph{Compos.\ Math.}~\textbf{154} (2018), 565--593.

\bibitem[CP23a]{CP23a}
Swee~Hong~Chan and Igor~Pak,
Computational complexity of counting coincidences, to appear in \emph{Theoret. Comput. Sci.}; \ts
{\tt arXiv:2308.10214}.

\bibitem[CP23b]{CP}
Swee~Hong~Chan and Igor~Pak,
Equality cases of the Alexandrov--Fenchel inequality are not in the
polynomial hierarchy, preprint (2023), 35~pp.; \ts
{\tt arXiv:2309.05764}.

\bibitem[CP23c]{CP23-survey}
Swee~Hong~Chan and Igor~Pak,
Linear extensions of finite posets, preprint (2023), 55~pp.; \ts {\tt arXiv:2311.} {\tt 02743}.

\bibitem[Eli17]{Eli17}
Sergi~Elizalde,
Continued fractions for permutation statistics,
\emph{Discrete Math.\ Theor.\ Comput.\ Sci.}~\textbf{19} (2017),
no.~2, Paper No.~11, 24~pp.

\bibitem[Fla80]{Fla80}
Philippe~Flajolet,
Combinatorial aspects of continued fractions,
\emph{Discrete Math.}~\textbf{32} (1980), 125--161.


\bibitem[FS09]{FS}
Philippe~Flajolet and Robert~Sedgewick, \emph{Analytic combinatorics},
Cambridge Univ.~Press, Cambridge, 2009.

\bibitem[GJ83]{GJ83}
Ian~P.~Goulden and David~M.~Jackson,
\emph{Combinatorial enumeration}, John Wiley, New York, 1983, 569~pp.

\bibitem[HJ85]{HJ85}
Michel~Habib and Roland~J\'egou,
$N$-free posets as generalizations of series-parallel posets,
\emph{Discrete Appl.\ Math.}~\textbf{12} (1985), 279--291.

\bibitem[HW08]{HardyWright}
Godfrey~H.~Hardy and Edward~M.~Wright,
\emph{An introduction to the theory of numbers} (sixth~ed., revised),
Oxford Univ.\ Press, Oxford, 2008, 621~pp.

\bibitem[Hen06]{Hen06}
Doug~Hensley,
\emph{Continued fractions}, World Sci., Hackensack, NJ, 2006, 245~pp.

\bibitem[Hua15]{Hua15}
ShinnYih~Huang,
An improvement to Zaremba's conjecture,
\emph{Geom.\ Funct.\ Anal.}~\textbf{25} (2015), 860--914.

\bibitem[JT80]{JT80}
William~B.~Jones and Wolfgang~J.~Thron,
\emph{Continued fractions. Analytic theory and applications},
Addison-Wesley, Reading, MA, 1980, 428~pp.

\bibitem[Kan21]{Kan21}
Igor~D.~Kan,
A strengthening of the Bourgain--Kontorovich method: three new theorems,
\emph{Sb.\ Math.}~\textbf{212} (2021), 921--964

\bibitem[Kar13]{Kar13}
Oleg Karpenkov, \emph{Geometry of continued fractions},
Springer, Heidelberg, 2013, 405~pp.

\bibitem[Khi97]{Khi97}
Aleksandr~Ya.~Khinchin,
\emph{Continued fractions}, Dover, Mineola, NY, 1997, 95~pp.

\bibitem[Knu98]{Knuth98}
Donald~E.~Knuth,
\emph{The art of computer programming.\ Vol.~2.\ Seminumerical algorithms}
(third ed.), Addison-Wesley, Reading, MA, 1998, 762~pp.

\bibitem[KS21]{KS21}
Noah~Kravitz and Ashwin~Sah, Linear extension numbers of $n$-element posets,
\emph{Order}~\textbf{38} (2021), 49--66.

\bibitem[Lar86]{Lar86}
Gerhard~Larcher,
On the distribution of sequences connected with good lattice points,
\emph{Monatsh.\ Math.}~\textbf{101} (1986), 135--150.

\bibitem[Lie83]{Lie83}
Bernhard~Liehl,
\"Uber die Teilnenner endlicher Kettenbr\"uche (in German),
\emph{Arch.\ Math.\ (Basel)}~\textbf{40} (1983), 139--147.

\bibitem[Pan82]{Pan82}
Alexey~A.~Panov,
Mean for the sum of elements over a class of finite continued fractions,
\emph{Math.\ Notes}~\textbf{32} (1982), 781--785.

\bibitem[PSZ23]{PSZ23}
Mathias~P\'etr\'eolle, Alan~Sokal and Bao-Xuan~Zhu,
Lattice paths and branched continued fractions,
\emph{Mem.\ AMS}~\textbf{291} (2023), no.~1450, 154~pp.

\bibitem[Ruk06]{Ruk06}
Maria~G.~Rukavishnikova,
A probability estimate for the sum of incomplete partial quotients with fixed denominator,
(in Russian), \emph{Chebyshevski\u{\i} Sb.}~\textbf{7} (2006), no.~4, 113--121;
available at \ts \href{http://tinyurl.com/ytv33cct}{tinyurl.com/ytv33cct}

\bibitem[Ruk11]{Ruka}
Maria~G.~Rukavishnikova,
The law of large numbers for the sum of partial quotients of a rational
number with a fixed denominator,
\emph{Math.\ Notes}~\textbf{90} (2011), 418--430.

\bibitem[Sau21]{Sau21}
Tomas~Sauer,
\emph{Continued fractions and signal processing},
Springer, Cham, 2021, 263~pp.

\bibitem[Shk21]{Shk21}
Ilya~D.~Shkredov, Non-commutative methods in additive combinatorics,
\emph{Russian Math.\ Surveys}~\textbf{76} (2021), 1065--1122.

\bibitem[Shu23]{Shu23}
Nikita Shulga, Radical bound for Zaremba's conjecture,
preprint (2023), 8~pp.; \ts {\tt arXiv:2310.09801}.

\bibitem[OEIS]{OEIS}
Neil~J.~A.~Sloane,
\emph{The online encyclopedia of integer sequences},
\href{http://oeis.org}{\tt oeis.org}

\bibitem[Sou23]{Sou23}
David Soukup,
Complexity of sign imbalance, parity of linear extensions, and height $2$ posets,
preprint (2023), 9~pp.; \ts {\tt arXiv:2311.02203}.

\bibitem[Spi21]{Spi21}
Thom\'as Jung Spier, \emph{Graph Continued Fractions},
Ph.D.\ thesis, IMPA, 2021, 94~pp.

\bibitem[RS92]{RS92}
Andrew M.~Rockett and Peter~Sz\"usz,
\emph{Continued fractions}, World Sci., River Edge, NJ, 1992, 188~pp.

\bibitem[Sch19]{Sch19}
Rolf Schiffler, Snake graphs, perfect matchings and continued fractions,
in \emph{Snapshots of modern mathematics from Oberwolfach}, 2019, No~1,
10~pp; available at \ts \href{https://publications.mfo.de/handle/mfo/1405}{publications.mfo.de/handle/mfo/1405}


\bibitem[SZ22]{SZ22}
Alan~D.~Sokal and Jian~Zeng,
Some multivariate master polynomials for permutations, set partitions,
and perfect matchings, and their continued fractions,
\emph{Adv.\ in Appl.\ Math.}~\textbf{138} (2022),
Paper No.~102341, 122~pp.

\bibitem[Sta12]{Sta-EC}
Richard~P.~Stanley, {\em Enumerative Combinatorics}, vol.~1 (second ed.)
and vol.~2, Cambridge Univ.~Press, 2012 and~1999, 626~pp.\ and 581~pp.

% \bibitem[Ruk11]{Ruk11}
% M.~G.~Rukavishnikova,
% The law of large numbers for the sum of partial quotients of a rational number with a fixed denominator,
% \emph{Math.\ Notes}~\textbf{90} (2011), 418--430.

\bibitem[Ten09]{Ten}
Bridget~E.~Tenner,
Optimizing linear extensions,
\emph{SIAM J.\ Discrete Math.}~\textbf{23} (2009), 1450--1454.

\bibitem[Tro95]{Tro}
William~T.~Trotter, Partially ordered sets, in \emph{Handbook of combinatorics},
vol.~1, Elsevier, Amsterdam, 1995, 433--480.

\bibitem[Vie85]{Vie85}
Xavier G\'erard Viennot,
A combinatorial theory for general orthogonal polynomials with extensions and applications,
in \emph{Lecture Notes in Math.}~\textbf{1171}, Springer, Berlin, 1985, 139--157.

\bibitem[YK75]{YK75}
Andrew~C.~Yao and Donald~E.~Knuth, Analysis of the subtractive algorithm for
greatest common divisors, \emph{Proc.\ Nat.\ Acad.\ Sci.\ USA}~\textbf{72} (1975), 4720--4722.

\bibitem[Zar72]{Zar72}
Stanis\l{}aw K.~Zaremba,
La m\'ethode des ``bons treillis'' pour le calcul des int\'egrales multiples (in French),
in \emph{Applications of number theory to numerical analysis},
Academic Press, New York, 1972, 39--119.

\end{thebibliography}
\end{document}